\title[On discrete holomorphic Paley--Wiener spaces and sampling]{On discrete holomorphic Paley--Wiener spaces \\and sampling on the square lattice}
\date{}
\DeclareMathOperator*{\supp}{supp}
\DeclareMathOperator{\sinc}{sinc}
\newcommand{\bZ}{\mathbb Z}
\newcommand{\leqs}{\leqslant}
\newtheorem{thm}{Theorem}[section]
\newtheorem{lem}[thm]{Lemma}
\newtheorem{prop}[thm]{Proposition}
\newtheorem{cor}[thm]{Corollary}
\theoremstyle{definition}
\newtheorem{rmk}[thm]{Remark}
\theoremstyle{definition}\newtheorem{defn}[thm]{Definition}
\newtheorem*{thm*}{Theorem}
\DeclareMathOperator{\id}{Id}
\address{\phantom{i}$^1$Dipartimento di Ingegneria Gestionale, dell'Informazione e
della Produzione, Universit{\`a} degli Studi di Bergamo, Viale G. Marconi 5, 24044,
Dalmine BG, Italy}
\email{alessandro.monguzzi@unibg.it}
\author[A. Monguzzi]{A. Monguzzi \orcidlink{0000-0003-3233-5000}}
\author[M. Monti]{M. Monti\orcidlink{0000-0001-6848-5938}}
\address{Dipartimento di Ingegneria Gestionale, dell'Informazione e
della Produzione, Universit{\`a} degli Studi di Bergamo, Viale G. Marconi 5, 24044,
Dalmine BG, Italy}
\email{alessandro.monguzzi@unibg.it}
\email{matteo.monti@unibg.it}
\thanks{MSC 2020: 30G25; 39A12; 30D15; 30H99}
\thanks{All the authors are members of Indam--Gnampa and are supported by the PRIN 2022 project "TIGRECO - TIme-varying signals on Graphs: REal and COmplex methods" funded by the MUR (Ministero dell'Università e della Ricerca), Grant\textunderscore 20227TRY8H, CUP\textunderscore F53D23002630001.}
\keywords{Discrete complex analysis, Paley--Wiener spaces, sampling}
\newcounter{mysubequations}
\begin{document}
\begin{abstract}
We consider a reproducing kernel Hilbert space of discrete entire functions on the square lattice $\mathbb Z^2$ inspired by the classical Paley--Wiener space of entire functions of exponential growth in the complex plane. For such space we provide a Paley--Wiener type characterization and a sampling result. 
\end{abstract}

\maketitle

\section{Introduction}
Discrete holomorphicity was distinctively introduced about 80 years ago  in \cite{Isa, Isa2}  and again in \cite{Fer} and later studied extensively by  Duffin and collaborators \cite{D, D2, DP} in the setting of the square lattice and of rhombic lattices. After its introduction there was
 a steady small production of papers on discrete holomorphicity and related areas, but a major interest in the topic remained dormant for several years. It is in the last 20 years or so that the study of discrete holomorphic functions has regained the attention of several mathematicians.
 Kenyon in \cite{Kenyon} and Smirnov, Chelkak and collaborators in a series of papers  \cite{CS, CS2, C, CLR} resumed the studies of Duffin on rhombic lattices (equivalently, isoradial graphs) and carefully developed techniques of discrete complex analysis with a view to important applications in probability and statistical physics. Mercat, on the other hand,  extended the theory to the setting of discrete Riemann surfaces \cite{Mercat, Mercat2}. We also recall, among others, the interesting papers \cite{Skopenkov, BG}, whereas for a general introduction to the topic of discrete complex analysis we refer the reader to the expository papers \cite{Smirnov} and \cite{L} and the references therein.
 
 One area that seems to be largely unexplored in discrete complex analysis is the one of function spaces of discrete holomorphic functions. A major obstacle certainly is the fact that the pointwise product of two discrete holomorphic functions is no longer holomorphic. So there is no standard way to proceed and new ideas and tools are needed. See for example \cite{Alpay}, where two different notions of product are introduced and studied.  There are some interesting papers from the 1970s by Zeilberger and collaborators on (partial) discrete versions of Paley--Wiener type theorems on entire functions of exponential growth \cite{ZD, Z2, Z3, Z4} and some other papers scattered in the literature (for instance \cite{Bolen, DM}), but there has not yet been a systematic study of discrete holomorphic function spaces.

Building on the results of Zeilberger and collaborators we study a reproducing kernel Hilbert space of discrete entire functions on the lattice $\mathbb Z^2$ which is reminiscent of the classical Paley--Wiener space of entire functions of exponential growth. For such a space we give a characterization in terms of the Fourier transform of its elements and prove a sampling result. We would like to point out here that one of the goal we had in mind when we started this project was to improve Zeilberger's results and to define and study a Paley--Wiener space on $\mathbb Z^2$ relying as much as possible on discrete complex analysis, thus differing from Pesenson's approach in \cite{Pesenson}, where he defines Paley--Wiener spaces on combinatorial graphs via the spectral theory of the classical Laplace operator. It was therefore a nice surprise to discover in retrospect that the space we consider on $\mathbb Z^2$ and the one considered by Pesenson are, although different, closely related. We will come back to this in Section \ref{s:sampling}.

A function $F:\mathbb Z^2\to \mathbb C$ is said to be \emph{discrete entire} if the identity 
\begin{equation}\label{eq:holomorphicity}
F(m+1,n+1)-F(m,n)=-i\big(F(m,n+1)-F(m+1,n)\big)
\end{equation}
holds true for every $(m,n)\in\mathbb Z^2$. The above identity can be easily reformulated in terms of the real and imaginary parts of the function $F$, yielding a pair of equations reminiscent of the classical Cauchy--Riemann equations. Since this is not necessary for our purpose we avoid to do it and instead refer the reader to \cite{Smirnov}.  If we embed $\mathbb Z^2$ in the complex plane, it is immediate to verify that the restrictions of the entire functions $z$ and $z^2$ to the Gaussian integers are discrete entire. However, this is no longer true for the function $z^3$. Hence, the pointwise product of two discrete entire functions is not discrete entire in general. This is certainly a major drawback of discrete holomorphicity. 

Before stating our results, we recall the definition of discrete entire exponential functions given in \cite{ZD}. In the following we identify the one-dimensional torus $\mathbb T=\mathbb R/2\pi\mathbb Z$ with the interval $[-\pi,\pi)$.
\begin{defn}\label{d:exponential}
For every $t\in\mathbb T, t\neq\pm \pi/2$ we call \emph{discrete exponential} the function $e_t:\mathbb Z^2\to \mathbb C$, 
\begin{equation}\label{d:discrete-exponential}
e_{t}(m,n)=e^{itm}\bigg(\frac{1+ie^{it}}{i+e^{it}}\bigg)^n.
\end{equation}
\end{defn}
Here we limit ourselves to noting that the restriction of $e_t(m,n)$ at height $n=0$ is the actual exponential function and to pointing out that $e_t$ is discrete entire on $\mathbb Z^2$. For the origin of such a function we refer the reader to Section \ref{s:preliminaries}.

Now that we have discrete exponentials, we can consider discrete entire functions of \emph{discrete exponential growth}. This was first done by Zeilberger and Dym in \cite{ZD}, where they proved the following result.
\begin{thm}[Zeilberger--Dym, {\cite{ZD}}]\label{thm:ZD}
Let $0<\alpha<\pi/2$. Let $F$ be an entire function  with the property that for every $k\in\mathbb N$ there exists a constant $c_{k}>0$ such that
\begin{equation}\label{F:decay-stronger}
|F(m,n)|\leqslant c_{k} (1+|m|)^{-k}|e_{-\alpha}(0,1)|^{|n|}.
\end{equation}
Then, there exists a function $f$ in $C^\infty(\mathbb T)$ supported in
\medskip
\[
D_\alpha=\big\{t: |t|\leqslant\alpha\big\}\cup \big\{t: \pi-\alpha\leqslant|t|\leqslant\pi\big\}
\]
such that the identity 
\begin{equation*}
F(m,n)=\frac{1}{2\pi}\int_{\mathbb T} f(t) e_t(m,n)\, dt
\end{equation*}
holds true.
\end{thm}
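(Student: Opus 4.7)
The plan is to recover the spectral density $f$ from the restriction of $F$ to the row $n=0$, extend the Fourier-type representation to every row via the discrete holomorphicity relation \eqref{eq:holomorphicity}, and then use the prescribed growth in $n$ to localize the support of $f$ inside $D_\alpha$.

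First, the hypothesis at $n=0$ says that $F(\cdot,0)$ is a rapidly decreasing sequence on $\mathbb{Z}$, so I would set
\[
f(t):=\sum_{m\in\mathbb{Z}}F(m,0)\,e^{-imt},
\]
which defines a $C^{\infty}(\mathbb{T})$ function with $F(m,0)=\frac{1}{2\pi}\int_{\mathbb{T}}f(t)\,e_t(m,0)\,dt$. To lift this to general $n\in\mathbb{Z}$, I set $\widehat{F}_n(t):=\sum_{m}F(m,n)e^{-imt}$ and take the Fourier series in $m$ of \eqref{eq:holomorphicity}; a short computation produces the one-step recurrence
\[
(i+e^{it})\,\widehat{F}_{n+1}(t)=(1+ie^{it})\,\widehat{F}_{n}(t),
\]
which, for $t\neq -\pi/2$, gives $\widehat{F}_n(t)=e_t(0,n)\,f(t)$ for every $n\in\mathbb{Z}$; at the exceptional value $t=-\pi/2$ the left-hand side vanishes, forcing $\widehat{F}_n(-\pi/2)=0$ for all $n$ (and in particular $f(-\pi/2)=0$), so the formula extends consistently. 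Fourier inversion in $m$ then yields
\[
F(m,n)=\frac{1}{2\pi}\int_{\mathbb{T}}\widehat{F}_n(t)\,e^{imt}\,dt=\frac{1}{2\pi}\int_{\mathbb{T}}f(t)\,e_t(m,n)\,dt,
\]
which is the representation in the statement.

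The remaining step, and the core of the proof, is to show $\supp f\subset D_\alpha$. The direct computation $|e_t(0,1)|^2=(1-\sin t)/(1+\sin t)$ shows that $|e_t(0,1)|$ is a strictly decreasing function of $\sin t$, and a short algebraic check then gives
\[
\mathbb{T}\setminus D_\alpha=\bigl\{t:|e_t(0,1)|>|e_{-\alpha}(0,1)|\bigr\}\cup\bigl\{t:|e_t(0,1)|<|e_{-\alpha}(0,1)|^{-1}\bigr\}=(-\pi+\alpha,-\alpha)\cup(\alpha,\pi-\alpha).
\]
Applying the growth hypothesis with $k=2$ produces the uniform estimate $|\widehat{F}_n(t)|\leqslant C\,|e_{-\alpha}(0,1)|^{|n|}$, and combining with the identity $|f(t)|=|e_t(0,1)|^{-n}|\widehat{F}_n(t)|$ yields, for $t\in\mathbb{T}\setminus D_\alpha$, the two bounds
\[
|f(t)|\leqslant C\Bigl(\tfrac{|e_{-\alpha}(0,1)|}{|e_t(0,1)|}\Bigr)^{n}\ \ (n>0),\qquad |f(t)|\leqslant C\bigl(|e_t(0,1)|\,|e_{-\alpha}(0,1)|\bigr)^{|n|}\ \ (n<0).
\]
Sending $n\to+\infty$ forces $f=0$ pointwise on $(-\pi+\alpha,-\alpha)$, while $n\to-\infty$ forces $f=0$ on $(\alpha,\pi-\alpha)$; by continuity $\supp f\subset D_\alpha$. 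The main obstacle is precisely this support step: one has to pair each component of $\mathbb{T}\setminus D_\alpha$ with the correct sign of $n$, since the opposite choice produces a divergent bound and no conclusion.
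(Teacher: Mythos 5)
Your argument is correct, and the first thing to note is that the paper itself gives no proof of Theorem \ref{thm:ZD}: it is quoted from Zeilberger--Dym \cite{ZD}, so the only internal point of comparison is the paper's proof of the $L^2$ analogue, Theorem \ref{thm:PW-vero}. There the support localization is done quite differently: the authors pair $F$ with $e_\beta$ via Duffin's discrete contour integral (Proposition \ref{p:integral}) over large rectangles $C_R$, show the lateral and top contributions vanish as $R\to\infty$, and read off $(1+\cos\beta)f(-\beta)=0$. Your route --- Fourier-transforming the discrete Cauchy--Riemann relation in $m$ to get the first-order recurrence $(i+e^{it})\widehat F_{n+1}=(1+ie^{it})\widehat F_n$, solving it as $\widehat F_n=e_t(0,n)f$, and then playing the uniform bound $|\widehat F_n(t)|\leqslant C|e_{-\alpha}(0,1)|^{|n|}$ against the explicit rate $|e_t(0,1)|^2=(1-\sin t)/(1+\sin t)$ --- is more elementary and arguably cleaner for this statement; you correctly pair $n\to+\infty$ with $(-\pi+\alpha,-\alpha)$ and $n\to-\infty$ with $(\alpha,\pi-\alpha)$, which is indeed the one place where a sign error would be fatal. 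What your method buys is a short, contour-free proof; what it costs is that it leans essentially on the hypothesis \eqref{F:decay-stronger} holding at \emph{every} height $n$, which makes each $\widehat F_n$ pointwise defined and uniformly controlled --- exactly the input that is unavailable in the $L^2$ setting, where the paper must first prove $F(m,n)\to0$ as $|m|\to\infty$ (Theorem \ref{thm:PW_decay_ascoli}) and then still resort to the contour argument. Two cosmetic points: the exceptional value for the \emph{downward} iteration of the recurrence is $t=\pi/2$ (where $1+ie^{it}=0$), not $t=-\pi/2$, though the same vanishing conclusion holds there; and the representation $F(m,n)=\frac{1}{2\pi}\int_{\mathbb T}f(t)e_t(m,n)\,dt$ should either be written after the support step or with the integrand understood as the continuous extension $\widehat F_n(t)e^{imt}$ at $t=\pm\pi/2$. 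Neither affects the validity of the proof.
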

 We  also recall the paper \cite{Z4}, where a  notion of exponential growth different than the one above  is used.  A few comments are in order. First, notice that equation \eqref{F:decay-stronger} guarantees a ``discrete exponential bound'' on the growth of the function $F$ and that such condition can be rewritten as
\[
|F(m,n)|\leqslant c_{k} (1+|m|)^{-k}\bigg(\frac{\cos\alpha}{1-\sin\alpha}\bigg)^{|n|}.
\]
Second, notice that \eqref{F:decay-stronger} also demands a fast horizontal decay of the function $F$ at every height $n\in\mathbb Z$. Such assumption guarantees in particular that the function
\[
f_n(t)=\sum_{m\in\mathbb Z} F(m,n)e^{-imt}
\]
is a well-defined smooth function on the torus $\mathbb T$ for every $n\in\mathbb Z$. 
We refer the reader to Theorem \ref{thm:PW_decay_ascoli} and Theorem \ref{thm:decay_zero} for results under weaker decay properties at every height. In what follows we state a partial converse of Theorem \ref{thm:ZD}, where  a growth condition weaker than \eqref{F:decay-stronger} appears.

\begin{thm}\label{thm:PW}
Let $0<\alpha<\pi/2$. Let $f$ be a function in $C^\infty(\mathbb T)$ with
$\supp f\subseteq D_\alpha$
and set 
\begin{equation}\label{eq:PW2}
F(m,n)=\frac{1}{2\pi}\int_{\mathbb T} f(t) e_t(m,n)\, dt.
\end{equation}
Then,  $F$ is a discrete entire function with the property that for every $k\in\mathbb N$ and every $\varepsilon>0$ with $\alpha+\varepsilon<\pi/2$ there exists a constant $c_{k,\varepsilon,\alpha}>0$ such that
\begin{equation}\label{F:decay}
|F(m,n)|\leqslant c_{k,\varepsilon,\alpha} (1+|m|)^{-k}|e_{-(\alpha+\varepsilon)}(0,1)|^{|n|}.
\end{equation}
 \end{thm}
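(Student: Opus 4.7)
The discrete holomorphicity of $F$ is immediate: for every fixed $t\in\mathbb T$ the function $(m,n)\mapsto e_t(m,n)$ satisfies \eqref{eq:holomorphicity}, so one may interchange integration in $t$ with the finite--difference operator on the right--hand side of \eqref{eq:PW2} and deduce that $F$ itself satisfies \eqref{eq:holomorphicity}.

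To obtain the quantitative bound we view $F(m,n)$ as a Fourier coefficient and integrate by parts in $t$. Setting $\lambda(t):=(1+ie^{it})/(i+e^{it})$ we have $e_t(m,n)=e^{itm}\lambda(t)^n$, and a direct computation gives $|\lambda(t)|^2=(1-\sin t)/(1+\sin t)$. Thus $\lambda$ has a zero at $t=\pi/2$ and a pole at $t=-\pi/2$, both of which lie outside $D_\alpha$ because $\alpha<\pi/2$. Since $f\in C^\infty(\mathbb T)$ is supported in $D_\alpha$, the function $g_n(t):=f(t)\lambda(t)^n$ belongs to $C^\infty(\mathbb T)$ for every $n\in\mathbb Z$. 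Moreover, on $D_\alpha$ one has $\sin t\in[-\sin\alpha,\sin\alpha]$, hence
\[
|\lambda(t)|^{n}\leqs|\lambda(-\alpha)|^{|n|}=|e_{-\alpha}(0,1)|^{|n|},\qquad t\in D_\alpha,\ n\in\mathbb Z.
\]

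For $m\neq 0$, $k$--fold integration by parts on $\mathbb T$ (with no boundary contributions, as $g_n$ is smooth and periodic) gives $|F(m,n)|\leqs(2\pi|m|^k)^{-1}\|g_n^{(k)}\|_{L^1(\mathbb T)}$. Using the identity $(\lambda^n)'=n(\lambda'/\lambda)\lambda^n$ and an easy induction, $(\lambda^n)^{(k)}$ equals $P_k(n,t)\,\lambda(t)^n$, where $P_k(n,\cdot)$ is, on $\supp f$, a polynomial in $n$ of degree at most $k$ with smooth bounded coefficients (depending only on $\lambda$). Leibniz then yields $|g_n^{(k)}(t)|\leqs C_{k,f}(1+|n|)^k|\lambda(t)|^n$ on $D_\alpha$, and combining with the trivial bound $|F(0,n)|\leqs C_f|e_{-\alpha}(0,1)|^{|n|}$ we arrive at
\[
|F(m,n)|\leqs C_{k,f}\,(1+|m|)^{-k}\,(1+|n|)^k\,|e_{-\alpha}(0,1)|^{|n|}.
\]

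It remains to absorb the polynomial factor $(1+|n|)^k$ into the exponential: the map $\alpha\mapsto|e_{-\alpha}(0,1)|=\sqrt{(1+\sin\alpha)/(1-\sin\alpha)}$ is strictly increasing on $(0,\pi/2)$, so for any admissible $\varepsilon>0$ the ratio $|e_{-(\alpha+\varepsilon)}(0,1)|/|e_{-\alpha}(0,1)|$ exceeds $1$, and therefore $(1+|n|)^k|e_{-\alpha}(0,1)|^{|n|}\leqs C_{k,\varepsilon,\alpha}|e_{-(\alpha+\varepsilon)}(0,1)|^{|n|}$ for all $n\in\mathbb Z$. This produces \eqref{F:decay}; the small enlargement from $\alpha$ to $\alpha+\varepsilon$ is exactly what absorbs the polynomial growth in $n$ produced by the differentiations. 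The only mildly technical step is the Leibniz/induction computation of $g_n^{(k)}$; everything else is routine Fourier analysis on the circle.
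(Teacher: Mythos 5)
Your proposal is correct, and the holomorphicity part and the $k$-fold integration by parts coincide with the paper's argument; the difference lies in how the derivatives $\frac{d^j}{dt^j}\big(\lambda(t)^n\big)$ are estimated. The paper extends $\varphi_t(n)=\big(\cos t/(1+\sin t)\big)^n$ holomorphically to a strip and applies Cauchy's integral formula on a circle of radius $\alpha+\varepsilon$ (and a second circle centred at $\pi$ for the other component of $D_\alpha$), so that the $j$-th derivative is bounded by $j!\,\varepsilon^{-j-1}(\alpha+\varepsilon)\sup_{\gamma}|\varphi_\zeta(n)|=j!\,\varepsilon^{-j-1}(\alpha+\varepsilon)\,|e_{-(\alpha+\varepsilon)}(0,1)|^{|n|}$; no polynomial factor in $n$ ever appears, and the $\varepsilon$-loss enters geometrically through the radius of the contour. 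You instead run a purely real-variable induction, $(\lambda^n)^{(k)}=P_k(n,t)\lambda(t)^n$ with $P_k$ polynomial in $n$ of degree $\leqslant k$ and coefficients smooth on $D_\alpha$ (legitimate, since the zero and pole of $\lambda$ sit at $\pm\pi/2\notin D_\alpha$), which costs you a factor $(1+|n|)^k$ that you then absorb using the strict monotonicity of $\alpha\mapsto|e_{-\alpha}(0,1)|$. Both routes are sound. Yours is more elementary (no complex-analytic extension of $\varphi$, and it treats the two components of $D_\alpha$ in one stroke since only $|\sin t|\leqslant\sin\alpha$ is used), and it makes transparent that the passage from $\alpha$ to $\alpha+\varepsilon$ is exactly the price of killing polynomial growth in $n$; the paper's route yields the cleaner intermediate estimate $\sup_{t}\big|\frac{d^j}{dt^j}\varphi_t(n)\big|\leqslant j!\,\varepsilon^{-j-1}(\alpha+\varepsilon)|e_{-(\alpha+\varepsilon)}(0,1)|^{|n|}$ with explicit dependence of the constant on $j$ and $\varepsilon$.
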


\begin{rmk}
 The exponential growth conditions \eqref{F:decay-stronger} and \eqref{F:decay}
  play the role of the classical continuous growth conditions of functions of exponential type $\alpha$. Namely, an entire function $f$ is of exponential type $\alpha$ if there exists a constant $c>0$ such that 
  \[
  |f(z)|\leqs  c e^{\alpha|z|}.
  \]
  Alternatively, one may find in the literature the condition that, for all $\varepsilon>0$, there exists $c_\varepsilon>0$ such that
  \[
  |f(z)|\leqs c_\varepsilon e^{\alpha+\varepsilon}|z|.
  \]
  These two conditions turn out to be equivalent for $L^p$ entire functions (see, e.g., \cite[Remark 2]{Andersen}). Thus, a natural question, beyond the goal of the present paper, would be to investigate if the same holds true for the discrete conditions \eqref{F:decay-stronger} and \eqref{F:decay}. 
 \end{rmk}

We now focus on an $L^2$ version of the above theorems. From now on $\alpha$ will always be such that $0<\alpha<\pi/2$. Consider an entire function $F$ on $\mathbb Z^2$ satisfying the following properties,
\begin{align}\label{eq:PW_p_i}
&(i)\quad \exists c>0 \textrm{ such that } |F(m,n)|\leqslant c |e_{-\alpha}(0,1)|^{|n|};\\
\label{eq:PW_p_iii}
&(ii)\quad \sum_{m\in\mathbb Z}|F(m,0)|^2<+\infty
\end{align}
and define the Paley--Wiener space $PW_{\alpha}$ as
\begin{equation*}
PW_\alpha=\bigg\{F:\mathbb Z^2\to\mathbb C: F \textrm{ is entire and satisfies }\eqref{eq:PW_p_i}\textrm{ and } \eqref{eq:PW_p_iii} 
\bigg\}
\end{equation*}
endowed with the norm
\begin{equation}\label{PW_norm}
\|F\|^2_{PW_\alpha}= \sum_{m\in\mathbb Z} |F(m,0)|^2.
\end{equation}
We will prove that $PW_\alpha$ is a reproducing kernel Hilbert space, we will compute its reproducing kernel and we will prove a sampling result. At this stage is not even clear that \eqref{PW_norm} defines an actual norm on $PW_\alpha$.
We prove the following characterization of $PW_\alpha$ (cfr. \cite{Z4, Mugler}).
\begin{thm}\label{thm:PW-vero}
Let $F$ be a function in $PW_\alpha$. Then there exists a function $f$ in $L^2(\mathbb T)$ with $\supp f\subseteq D_\alpha$
such that the identity
\begin{equation}\label{eq:PW}
F(m,n)=\frac{1}{2\pi}\int_{\mathbb T} f(t) e_t(m,n)\, dt
\end{equation}
holds true. Moreover,
\begin{equation}\label{eq:isometry}
\|F\|_{PW_\alpha}=(2\pi)^{-1/2}\|f\|_{L^2(\mathbb T)}.
\end{equation}
Conversely, let $f\in L^2(\mathbb T)$ be such that $\supp f\subseteq D_\alpha$. Then, the function $F$ defined by \eqref{eq:PW} is in $PW_\alpha$ and \eqref{eq:isometry} holds true. 
\end{thm}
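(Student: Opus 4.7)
The plan is to handle the converse direction by direct computation, then obtain the forward direction by producing a candidate density $f$ via Plancherel, proving a uniqueness lemma that also shows \eqref{PW_norm} is a norm, and finally proving the support statement $\supp f\subseteq D_\alpha$, which will be the main obstacle.

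For the converse, set $\phi(t):=(1+ie^{it})/(i+e^{it})$ so that $e_t(m,n)=e^{imt}\phi(t)^n$ and $|\phi(t)|^2=(1-\sin t)/(1+\sin t)$. A direct check shows $|e_t(m,n)|\leqs|e_{-\alpha}(0,1)|^{|n|}$ for every $t\in D_\alpha$. Hence the integrand in \eqref{eq:PW} is dominated, \eqref{eq:PW_p_i} follows from Cauchy--Schwarz, and linearity together with the discrete entirety of each $e_t$ (recalled in Section \ref{s:preliminaries}) forces $F$ to be discrete entire. At $n=0$, $F(m,0)$ equals the $m$-th Fourier coefficient of $f$, so Parseval yields both \eqref{eq:PW_p_iii} and \eqref{eq:isometry}.

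For the forward direction, given $F\in PW_\alpha$, \eqref{eq:PW_p_iii} and Plancherel produce a unique $f\in L^2(\mathbb T)$ with $F(m,0)=(2\pi)^{-1}\int_{\mathbb T}f(t)e^{imt}\,dt$ for all $m$ and $\|F\|^2_{PW_\alpha}=(2\pi)^{-1}\|f\|_{L^2}^2$. If $\supp f\subseteq D_\alpha$, the converse applied to $f$ produces $F^\sharp\in PW_\alpha$ satisfying \eqref{eq:PW} with $F^\sharp(\cdot,0)=F(\cdot,0)$; then $H:=F-F^\sharp\in PW_\alpha$ vanishes on the row $n=0$, and a uniqueness lemma forces $H\equiv 0$. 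The uniqueness lemma proceeds by setting $n=0$ in \eqref{eq:holomorphicity} to obtain $H(m+1,1)+iH(m,1)=0$, whence $H(m,1)=C_1(-i)^m$; writing \eqref{eq:holomorphicity} at $n=1$ then yields $H(m+1,2)+iH(m,2)=2C_1(-i)^m$, whose general solution $(C_2+2iC_1m)(-i)^m$ is uniformly bounded in $m$, as required by \eqref{eq:PW_p_i}, only if $C_1=0$. Iterating in both directions of $n$ gives $H\equiv 0$; this also shows that \eqref{PW_norm} is actually a norm.

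The support statement $\supp f\subseteq D_\alpha$ is the main obstacle. The idea is to view the distributional Fourier series $f_n(t):=\sum_{m\in\mathbb Z}F(m,n)e^{-imt}$ as a pseudo-measure, so that \eqref{eq:PW_p_i} reads $\sup_m|\widehat{f_n}(m)|\leqs c|e_{-\alpha}(0,1)|^{|n|}$. Fourier transforming \eqref{eq:holomorphicity} in $m$ gives $(i+e^{it})f_{n+1}(t)=(1+ie^{it})f_n(t)$, and since $i+e^{it}$ and $1+ie^{it}$ vanish only at $-\pi/2$ and $\pi/2$ respectively, iteration yields $f_n=f\cdot\phi^n$ as $L^2_{\mathrm{loc}}$ identities on $\mathbb T\setminus\{\pm\pi/2\}$ for every $n\in\mathbb Z$. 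If $\supp f\not\subseteq D_\alpha$, one selects a compact $K\subset(\mathbb T\setminus D_\alpha)\setminus\{\pm\pi/2\}$ on which $\|f\|_{L^2(K)}>0$ and $|\phi|$ is strictly bounded by $|e_{-\alpha}(0,1)|^{\pm 1}$ on the appropriate side, so that for $|n|\to\infty$ in one direction, $\|f_n\|_{L^2(K)}=\|f\phi^n\|_{L^2(K)}$ grows strictly faster than $|e_{-\alpha}(0,1)|^{|n|}$. The delicate step — and the expected technical core — is to convert this into a contradiction with the pseudo-measure bound on $f_n$: I plan to truncate by a smooth $\chi$ supported in a neighbourhood of $K$ disjoint from $\pm\pi/2$, estimate the Fourier coefficients of $\chi f_n$ by $c\|\hat\chi\|_{\ell^1}|e_{-\alpha}(0,1)|^{|n|}$ via the $\ell^\infty\times\ell^1$ pairing and convolution, and apply a Plancherel--Polya-type interpolation of the entire extension of $m\mapsto\widehat{\chi f_n}(m)$ (of exponential type strictly less than $\pi$, since $\supp\chi$ is a proper arc of $\mathbb T$) to transfer the $\ell^\infty$ estimate into an $L^\infty(\mathbb R)$ bound incompatible with the $L^2(\mathbb R)$ growth of $\widehat{\chi f_n}$ forced by Parseval.
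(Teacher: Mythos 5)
Your treatment of the converse direction, the isometry, and the uniqueness-on-a-row lemma is sound and matches the paper (your recursion $H(m,1)=C_1(-i)^m$, $H(m,2)=(C_2+2iC_1m)(-i)^m$ is exactly the mechanism behind the paper's Proposition \ref{p:uniqueness}). The support statement, however, is where your argument breaks, and the gap is in its very last step. From the pseudo-measure bound you correctly get $\sup_m|\widehat{\chi f_n}(m)|\leqslant c\,\|\hat\chi\|_{\ell^1}\,|e_{-\alpha}(0,1)|^{|n|}$, and Cartwright/Plancherel--P\'olya does upgrade this to $\|\widehat{\chi f_n}\|_{L^\infty(\mathbb R)}\leqslant C\,|e_{-\alpha}(0,1)|^{|n|}$ since the type is $<\pi$. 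But an $L^\infty(\mathbb R)$ bound is \emph{not} incompatible with large $L^2(\mathbb R)$ norm: there is no reverse Nikolskii inequality, and a bandlimited function of small height can have arbitrarily large $L^2$ mass by being spread out. Equivalently, at the level of the circle, a uniform bound on the Fourier coefficients of a distribution supported in a fixed arc does not control its $L^2$ norm: for a fixed smooth bump $h$ supported in $K$ and $M$ large, $g_N=\sum_{j=1}^N e^{ijMt}h(t)$ has $\|\hat g_N\|_{\ell^\infty}$ bounded independently of $N$ while $\|g_N\|_{L^2}\sim\sqrt N$. So the two estimates you end with, $\sup_m|\widehat{\chi f_n}(m)|\lesssim|e_{-\alpha}(0,1)|^{|n|}$ and $\|\chi f_n\|_{L^2}\gtrsim\rho^{-|n|}$, simply do not clash; closing the argument would require quantitative control on \emph{how} the mass of $\phi^nf$ distributes over frequencies, which is precisely the hard part you have not supplied.

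For comparison, the paper avoids this issue entirely by working on the lattice side: it pairs $F$ against a single discrete exponential $e_\beta$, $\alpha<\beta<\pi-\alpha$, over the boundary of a large discrete rectangle and applies the discrete Cauchy theorem (Proposition \ref{p:integral}). The bottom edge produces $2(1+\cos\beta)f(-\beta)$ in the limit (via a.e.\ convergence of $L^2$ Fourier series), while the top and lateral edges vanish thanks to the key numerical inequality $|e_{-\alpha}(0,1)|\,|e_\beta(0,1)|<1$ together with Theorem \ref{thm:PW_decay_ascoli}, i.e.\ $F(m,n)\to0$ as $|m|\to\infty$ at every height, which is itself proved by a diagonal/normal-families argument and is an ingredient your proposal does not need but whose role (converting mere boundedness into decay) is exactly what your Fourier-side scheme is missing.
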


We point out here that Paley--Wiener type spaces in discrete setting, namely on combinatorial graphs, have been investigated via spectral theory by Pesenson in \cite{Pesenson}. We will compare the space $PW_\alpha$ with Pesenson's results in Section \ref{s:remarks}.


Notice that, unlike in Zeilberger and Dym result, we do not explicitly require any decay at every height $n\in\mathbb Z$. 
However, the growth condition \eqref{eq:PW_p_i} together with the summability condition \eqref{eq:PW_p_iii} actually imply that
\[
\lim_{|m|\to+\infty}F(m,n)=0,\quad  n\in\mathbb Z,
\]
see Theorem \ref{thm:PW_decay_ascoli}. This last observation is crucial for the proof of Theorem \ref{thm:PW-vero}.

If we require a function $F$ of $PW_\alpha$ to be of rapid decay at height $n=0$ instead of being only in $L^2$, we obtain
the following stronger result for the decay of $F$ at every height. 
\begin{thm}\label{thm:decay_zero}
Let $F$ be in $PW_\alpha$.  
Assume that for every $k\in\mathbb N$ there exists a constant $c_{k}$ such that
\begin{equation}\label{eq:decay_zero}
|F(m,0)|\leqslant c_{k}(1+|m|)^{-k}.
\end{equation}
Then,
\[
\sum_{m\in\mathbb Z}|F(m,n)|<+\infty
\]
for every $n\in\mathbb Z$.
\end{thm}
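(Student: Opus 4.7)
My plan is to use Theorem~\ref{thm:PW-vero} to represent $F$ as the Fourier series of a symbol $f$, and then to upgrade the smoothness of $f$ (forced by the decay at $n=0$) to smoothness of the analogous symbol at every other height.

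First I would invoke Theorem~\ref{thm:PW-vero} to obtain $f\in L^2(\mathbb{T})$ with $\supp f\subseteq D_\alpha$ such that
\[
F(m,n)=\frac{1}{2\pi}\int_{\mathbb T} f(t)\,e_t(m,n)\,dt.
\]
Setting $n=0$ shows that $\{F(-m,0)\}_{m\in\mathbb{Z}}$ is the Fourier coefficient sequence of $f$. Hypothesis \eqref{eq:decay_zero} then asserts that these coefficients decay faster than any polynomial, so by the standard Fourier characterization of smooth periodic functions we get $f\in C^\infty(\mathbb{T})$.

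Next I would write $e_t(m,n)=e^{imt}\phi(t)^n$ with $\phi(t)=\frac{1+ie^{it}}{i+e^{it}}$, so that $F(-m,n)$ is the $m$-th Fourier coefficient of $g_n(t):=f(t)\phi(t)^n$. The crux of the proof is then to show $g_n\in C^\infty(\mathbb{T})$ for every $n\in\mathbb{Z}$. The function $\phi$ has its only zero at $t=\pi/2$ (where $1+ie^{it}=0$) and its only pole at $t=-\pi/2$ (where $i+e^{it}=0$), hence $\phi^n$ is smooth on $U:=\mathbb{T}\setminus\{\pm\pi/2\}$ for every $n\in\mathbb{Z}$. The strict inequality $\alpha<\pi/2$ ensures $\supp f\subseteq D_\alpha\subset U$, so one can pick an open neighborhood $V$ of $\supp f$ with $\overline{V}\subset U$. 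On $V$ the function $g_n$ is smooth as a product of smooth functions, while on the open set $\mathbb{T}\setminus\supp f$ (which contains $\pm\pi/2$) it vanishes identically; the two definitions agree on $V\setminus\supp f$, so $g_n$ extends to a function of class $C^\infty$ on all of $\mathbb{T}$.

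Once $g_n\in C^\infty(\mathbb{T})$, its Fourier coefficients $F(-m,n)$ decay faster than any polynomial in $m$; in particular $|F(m,n)|\leqslant c_n(1+|m|)^{-2}$ for some $c_n>0$, yielding $\sum_{m\in\mathbb{Z}}|F(m,n)|<+\infty$. The main obstacle is the global smoothness check for $g_n$: the potential issue is the singularity of $\phi^n$ at one of $\pm\pi/2$ (depending on the sign of $n$), but the support condition $\supp f\subseteq D_\alpha$ together with $\alpha<\pi/2$ keeps $f$ safely away from those points, so the product $f\phi^n$ extends smoothly by zero through them.
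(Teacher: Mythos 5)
Your argument is correct, but it takes a genuinely different route from the paper. The paper never passes to the Fourier side: it proves Theorem \ref{thm:decay_zero} by a purely ``lattice-side'' induction on the height $n$ (Lemma \ref{lem:L1}), using the layer recursion \eqref{formula_strati} of Lemma \ref{lem:formula} together with a boundedness argument at height $n+1$ to pin down the value $F(0,n)$ via the identity \eqref{eq:value_y}, and then deriving the explicit quantitative bound \eqref{eq:value_y_2}, $|F(m,n)|\leqslant 2^n\sum_{\ell\geqslant 0}(\ell+1)^{|n|-1}|F(m+\ell,0)|$, from which summability follows; reflections of $F$ handle the remaining quadrants. You instead invoke Theorem \ref{thm:PW-vero} to get the symbol $f$, upgrade it to $C^\infty(\mathbb T)$ from the rapid decay of its Fourier coefficients $F(-m,0)$, and observe that $g_n=f\,\varphi_\cdot(n)$ extends smoothly across $\pm\pi/2$ because $\supp f\subseteq D_\alpha$ is compactly separated from the zero and the pole of $\varphi$; this is sound (the gluing of $f\varphi_\cdot(n)$ on a neighborhood of $\supp f$ with $0$ off $\supp f$ is the right way to handle the singularity), and in fact at this point you could simply cite Theorem \ref{thm:PW} to conclude, since your intermediate statement is exactly its hypothesis. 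Your approach is shorter and yields the stronger conclusion that $F(\cdot,n)$ is rapidly decreasing at every height; the paper's approach is more elementary and self-contained, does not rely on the Paley--Wiener characterization, and produces the explicit identity \eqref{eq:value_y} and the quantitative estimate \eqref{eq:value_y_2}, which have independent interest.
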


We conclude by proving a sufficient condition for a sequence $\Lambda\subseteq \mathbb Z$ to be a sampling sequence for $PW_\alpha$. 
Let $\Lambda$ be sequence of integers and set  $\Lambda\cap 2\mathbb{Z}=\{p_k\}_k$ and $\Lambda\cap (2\mathbb{Z}+1)=\{q_k\}_k$. Assume also that $\{p_k\}_k$ and $\{q_k\}_k$ are ordered and set
\begin{equation}\label{eq: delte}
    \delta_e=\sup_k(p_{k+1}-p_k);\qquad \delta_o=\sup_k(q_{k+1}-q_k).
\end{equation}
\begin{thm}\label{thm: Sampling}
If $\max(\delta_e,\delta_o)<\pi/\alpha$, 
then $\Lambda$ is a sampling  sequence for $PW_\alpha$. Namely, there exists $c>0$ such that, for every function $F$ in $PW_\alpha$, 
\begin{equation}\label{eq:sampling}
c\|F\|^2_{PW_\alpha}\leqslant \sum_{\lambda\in\Lambda}|F(\lambda,0)|^2\leqslant \|F\|^2_{PW_\alpha}. 
\end{equation}
\end{thm}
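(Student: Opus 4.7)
The plan is to reduce the lower bound in \eqref{eq:sampling} to the classical sampling theorem for the continuous Paley--Wiener space $PW^{\mathrm{cl}}_{2\alpha}(\mathbb R)$ of $L^2(\mathbb R)$ entire functions of exponential type $2\alpha$, via a folding argument that exploits the two--piece structure of $D_\alpha$. The upper bound in \eqref{eq:sampling} is immediate since $\Lambda\subseteq\mathbb Z$, so only the lower bound requires work.

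By Theorem \ref{thm:PW-vero} I would write $F(m,0)=\frac{1}{2\pi}\int_{\mathbb T}f(t)e^{itm}\,dt$ for some $f\in L^2(\mathbb T)$ with $\supp f\subseteq D_\alpha$, and moreover $\|F\|_{PW_\alpha}^2=(2\pi)^{-1}\|f\|^2_{L^2(\mathbb T)}$. Split $f=f_1+f_2$ with $f_1=f\cdot\mathbf{1}_{[-\alpha,\alpha]}$ and $f_2=f\cdot\mathbf{1}_{D_\alpha\setminus[-\alpha,\alpha]}$, and let $g_2(t)=f_2(t-\pi)$ be the periodic $\pi$--shift of $f_2$, so that $\supp g_2\subseteq[-\alpha,\alpha]$. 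A one--line change of variable yields
\[
F(m,0)=\widehat{f_1}(-m)+(-1)^m\widehat{g_2}(-m),
\]
and separating by parity I obtain
\[
F(2k,0)=\widehat{\phi}(-2k),\qquad F(2k+1,0)=\widehat{\psi}(-(2k+1)),
\]
with $\phi=f_1+g_2$ and $\psi=f_1-g_2$ both in $L^2([-\alpha,\alpha])$. The substitution $s=2t$ then identifies $\{F(2k,0)\}_k$ with the integer samples of an entire $L^2(\mathbb R)$ function $\mathcal F_e$ of exponential type $2\alpha$, i.e.\ $\mathcal F_e\in PW^{\mathrm{cl}}_{2\alpha}(\mathbb R)$ with $\mathcal F_e(k)=F(2k,0)$ and $\|\mathcal F_e\|_{L^2(\mathbb R)}$ a constant multiple of $\|\phi\|_{L^2}$; analogously the odd samples produce $\mathcal F_o\in PW^{\mathrm{cl}}_{2\alpha}(\mathbb R)$ with $\mathcal F_o(k)=F(2k+1,0)$.

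Writing now $p_k=2r_k$ and $q_k=2s_k+1$ with $\{r_k\},\{s_k\}\subset\mathbb Z$, the hypothesis $\max(\delta_e,\delta_o)<\pi/\alpha$ becomes
\[
\sup_k(r_{k+1}-r_k)<\frac{\pi}{2\alpha},\qquad \sup_k(s_{k+1}-s_k)<\frac{\pi}{2\alpha},
\]
which is precisely the classical maximum--gap condition (Plancherel--Polya, Beurling) ensuring that $\{r_k\}$ and $\{s_k\}$ are sampling sequences for $PW^{\mathrm{cl}}_{2\alpha}(\mathbb R)$. Applying the continuous sampling theorem to $\mathcal F_e$ at $\{r_k\}$ and to $\mathcal F_o$ at $\{s_k\}$ gives
\[
\sum_k|F(p_k,0)|^2\geqs c\sum_m|F(2m,0)|^2,\qquad \sum_k|F(q_k,0)|^2\geqs c\sum_m|F(2m+1,0)|^2,
\]
and adding the two inequalities produces the desired bound $\sum_{\lambda\in\Lambda}|F(\lambda,0)|^2\geqs c'\|F\|^2_{PW_\alpha}$.

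The step I expect to be the main obstacle is the folding itself: I must verify that the coupled identity $F(m,0)=\widehat{f_1}(-m)+(-1)^m\widehat{g_2}(-m)$ decouples cleanly after the parity split, and that the even/odd subsequences genuinely lie in (and have $\ell^2$--norms controlled by the $L^2(\mathbb R)$--norms of functions in) $PW^{\mathrm{cl}}_{2\alpha}(\mathbb R)$. Once this \emph{band--doubling} identification is in place, the sharp threshold $\pi/\alpha$ in the hypothesis appears naturally as twice the classical threshold $\pi/(2\alpha)$, and the rest is a routine application of continuous sampling.
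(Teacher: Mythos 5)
Your argument is correct in outline, but it takes a genuinely different route from the paper. You fold the two components of $D_\alpha$ onto $[-\alpha,\alpha]$, decouple by parity into $\phi=f_1+g_2$ and $\psi=f_1-g_2$ (which indeed satisfy $\|\phi\|^2+\|\psi\|^2=2\|f\|^2$, so adding the two even/odd estimates does recover $\|F\|^2_{PW_\alpha}$), rescale to band $[-2\alpha,2\alpha]$, and then invoke the classical maximum-gap sampling theorem for $PW^{\mathrm{cl}}_{2\alpha}(\mathbb R)$; since $\{r_k\},\{s_k\}\subseteq\mathbb Z$ are automatically separated, the version you need (Gr\"ochenig's adaptive-weights theorem, or Beurling's sufficient density condition, with threshold $\pi/(2\alpha)$) applies and the bookkeeping of norms via Parseval is as you describe. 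The paper instead works entirely on the lattice: it builds a quasi-interpolation operator $A=PT$ (piecewise-linear interpolation run separately on $\Lambda\cap 2\mathbb Z$ and $\Lambda\cap(2\mathbb Z+1)$), proves $\|\id-A\|<1$ by combining a discrete Wirtinger inequality with a Bernstein inequality for the step-two difference $\nabla_2 F(m)=F(m+2)-F(m)$, and concludes by Neumann series. The structural insight is the same in both proofs --- the two-component spectrum $D_\alpha$ forces an even/odd splitting, which is why $\nabla_2$ rather than $\nabla_1$ carries a nontrivial Bernstein constant and why the hypothesis involves $\delta_e$ and $\delta_o$ --- and in fact the paper's Section \ref{s:remarks} sketches essentially your reduction when comparing $PW_\alpha$ with Pesenson's spaces via $F^e(m)=F(2m)$, $F^o(m)=F(2m+1)$. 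What the paper's route buys is a self-contained discrete proof with explicit constants and an iterative reconstruction algorithm; what yours buys is brevity, at the cost of importing the continuous theorem as a black box. If you write your version up, be explicit about which classical statement you are citing (the unweighted frame inequality for a separated sequence with gaps below the Nyquist length) rather than the vaguer ``Plancherel--P\'olya, Beurling'', and note that separation of $\{r_k\},\{s_k\}$ comes for free from integrality.
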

The above condition  involving $\delta_e$ and $\delta_o$ may be a little unexpected. We will come back to this in Section \ref{s:sampling}.

The paper is organized as follows. In Section \ref{s:preliminaries} we recall some preliminaries, in Section \ref{s:PW} we deal with the characterization of the space $PW_\alpha$ and Theorem \ref{thm:decay_zero}, whereas Section \ref{s:sampling} is devoted to the sampling result. We conclude with some final remarks in Section \ref{s:remarks}.

\section{Discrete exponentials and contour integrals}\label{s:preliminaries}
Here we briefly see how discrete exponentials arise and recall a useful result about a contour-type integral of discrete holomorphic functions.  In the following we sometimes identify $\mathbb Z^2$ and $\mathbb Z+i\mathbb Z$ even if not explicitly stated. The following construction of discrete exponentials is taken from \cite{ZD} and we include it here for the convenience of the reader.

We want a discrete entire function $e_t(m,n)$ which extends the classical exponential function defined on the integer $\mathbb Z$ to the discrete plane of the Gaussian integers. Namely, we are looking for a function $e_t(m,n)$  such that, for every $t\in [-\pi,\pi)$,
\begin{equation}\label{phi}
e_t(m,0)=e^{imt}.
\end{equation}
Moreover, in 
analogy with the continuous case $e^{z}=e^{x}e^{iy}$, we would like  such an extension to be of the form
\[
e_t(m,n)= e^{imt}\varphi_t(n)
\]
for some suitable function $\varphi_t$.
By \eqref{phi}, we necessarily require that $\varphi_t(0)=1$. Then, imposing the holomorphicity condition to $e_t(m,n)$, one gets
 \begin{align*}
e^{i(m+1)t} \varphi_t(n+1)-e^{imt}\varphi_t(n)=-i\big(e^{imt}\varphi_t(n+1)-e^{i(m+1)t}\varphi_t(n)\big).
\end{align*}
Equivalently,
\[
\varphi_t(n+1)(e^{it}+i)=\varphi_t(n)(1+ie^{it}).
\]
Notice that the above identity does not actually depend on $m$. By recursion and the assumption on $\varphi_t(0)$, one finally obtains
\begin{equation}\label{phi-explicit}
\varphi_t(n)=\bigg(\frac{1+ie^{it}}{i+e^{it}}\bigg)^n,\quad n\in\mathbb Z,\, t\neq\pm\frac{\pi}{2}.
\end{equation}

We conclude the section by recalling a useful contour-type integral for discrete holomorphic functions. Let $\Gamma\subseteq \mathbb Z+i\mathbb Z$ be a discrete contour with vertices $z_0,z_1,\ldots, z_m$ where $|z_{n}-z_{n+1}|=1$. In \cite{D} the following integral is defined,
\begin{align}\label{eq:contour_integral}
\int_{\Gamma} F:G\, =\frac{1}{4}\sum_{n=0}^{m} \big(F(z_n)+F(z_{n+1})\big)\big(G(z_n)+G(z_{n+1})\big) (z_n-z_{n+1}).
\end{align}
The following result holds true.
\begin{prop}[{\cite[Section 3]{D}}]\label{p:integral}
Let $F,G$ be two discrete entire functions and let $\Gamma$ be a closed discrete contour. Then,
\[
\int_{\Gamma} F:G=0.
\]
\end{prop}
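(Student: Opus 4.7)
The plan is to prove Proposition \ref{p:integral} by the standard "discrete Green's theorem" strategy: first reduce the claim to the case where the contour is the boundary of a single unit square (plaquette), then verify the identity directly on a plaquette using the discrete Cauchy--Riemann equation \eqref{eq:holomorphicity} for both $F$ and $G$.

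For the reduction, I would first observe that without loss of generality we may take $\Gamma$ to be a simple closed discrete contour; a general closed contour can be decomposed into a finite signed sum of such. A simple closed discrete contour bounds a finite region $\Omega \subseteq \mathbb Z^2$ which is the disjoint union of unit squares (plaquettes) $Q_1,\dots,Q_M$. I would then show that
\[
\int_{\Gamma} F:G \, = \, \sum_{j=1}^{M} \int_{\partial Q_j} F:G,
\]
where each $\partial Q_j$ is oriented counterclockwise. This holds because every edge strictly interior to $\Omega$ is shared by exactly two adjacent plaquettes and is traversed in opposite directions by their boundaries; in the defining sum \eqref{eq:contour_integral} the symmetric factor $(F(z_n)+F(z_{n+1}))(G(z_n)+G(z_{n+1}))$ is unchanged by reversing the edge while the factor $(z_n - z_{n+1})$ changes sign, so all contributions from interior edges cancel. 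Hence it suffices to prove that $\int_{\partial Q} F:G = 0$ on a single plaquette.

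For the plaquette computation, label the vertices counterclockwise as $z_0,z_1,z_2,z_3$ with $z_0-z_1=-1$, $z_1-z_2=-i$, $z_2-z_3=1$, $z_3-z_0=i$, and write $F_k := F(z_k)$, $G_k := G(z_k)$. The discrete holomorphicity \eqref{eq:holomorphicity} for $F$ on this plaquette becomes
\[
F_2 - F_0 = i(F_1 - F_3),
\]
and analogously for $G$. Expanding the four-term sum in \eqref{eq:contour_integral} and grouping monomials $F_j G_k$, the goal is to reduce $4\int_{\partial Q} F:G$ to a linear combination of $(F_2-F_0) - i(F_1-F_3)$ and $(G_2-G_0) - i(G_1-G_3)$ (each times a bounded expression in the values of the other function), which vanishes by the two Cauchy--Riemann identities.

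The main obstacle is this last algebraic simplification: the expansion produces sixteen bilinear terms $F_j G_k$, and one must regroup them so that the discrete Cauchy--Riemann relations for $F$ and for $G$ can be applied symmetrically. A cleaner route, which I would try if the brute-force expansion becomes unwieldy, is to rewrite \eqref{eq:contour_integral} using the identity
\[
(F_j+F_k)(G_j+G_k) = 2F_jG_j + 2F_kG_k - (F_j-F_k)(G_j-G_k) + (F_j+F_k)(G_j+G_k) - (F_jG_j+F_kG_k+F_jG_k+F_kG_j)+\dots
\]
so as to isolate "differential" terms $(F_{k+1}-F_k)(G_{k+1}-G_k)$ and "telescoping" pieces; the telescoping part vanishes around any closed contour, while the differential part on a plaquette is exactly the discrete analogue of $\oint dF\,dG$ and is killed by $\bar\partial F = \bar\partial G = 0$. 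In either case the result is a purely local algebraic verification, and the global statement follows from the cancellation of interior edges in step one.
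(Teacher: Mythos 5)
The paper does not actually prove this proposition: it is imported verbatim from Duffin \cite[Section 3]{D}, so there is no internal argument to compare against. Your outline is the classical one (and essentially Duffin's): additivity of the integral over edges, cancellation of interior edges under orientation reversal because $(F(z_n)+F(z_{n+1}))(G(z_n)+G(z_{n+1}))$ is symmetric while $(z_n-z_{n+1})$ is odd, reduction to a single plaquette, and a local algebraic verification using the discrete Cauchy--Riemann relation. The reduction step is sound; if you want to avoid fussing over what ``simple'' closed contours bound, just say that a closed edge-cycle in $\mathbb Z^2$ is an integer combination of plaquette boundaries (the plane has trivial first homology) and that $\int_{\Gamma}F:G$ is additive and orientation-odd edge by edge.

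The one genuine gap is that the heart of the proof --- the sixteen-term plaquette identity --- is asserted rather than carried out, and your proposed ``cleaner route'' display is circular as written (the left-hand side reappears on the right, followed by ellipses); it should be deleted. The brute-force regrouping does work, and here is the explicit identity that completes your argument. With your labelling ($z_1=z_0+1$, $z_2=z_1+i$, $z_3=z_2-1$) and $F_k=F(z_k)$, $G_k=G(z_k)$, one checks by expanding all sixteen bilinear terms that
\begin{align}
4\int_{\partial Q}F:G\,=\,&\bigl[(1-i)G_0+G_1-iG_3\bigr]\bigl(F_2-F_0-i(F_1-F_3)\bigr)\\
&+\bigl[-iF_1+(1-i)F_2+F_3\bigr]\bigl(G_2-G_0-i(G_1-G_3)\bigr),
\end{align}
an identity valid for arbitrary values $F_k,G_k$. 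Since \eqref{eq:holomorphicity} on the plaquette reads exactly $F_2-F_0=i(F_1-F_3)$ and likewise for $G$, both factors in parentheses vanish and $\int_{\partial Q}F:G=0$, which together with your cancellation step finishes the proof. (The decomposition above is not unique --- there is a one-parameter family of such splittings --- but any one of them suffices.)
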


\section{Paley--Wiener spaces}\label{s:PW}
In this section we prove Theorem \ref{thm:PW} and that $PW_\alpha$ is a normed vector space. We then prove Theorem \ref{thm:PW-vero} and that $PW_\alpha$ is a reproducing kernel Hilbert space. We conclude the section by proving Theorem~\ref{thm:decay_zero}. 

\begin{proof}[Proof of Theorem \ref{thm:PW}] 
The fact that $F$ is an entire function is obvious from \eqref{eq:PW2} and the holomorphicity of the discrete exponential. Recall that
\[
\varphi_t(n)= \bigg(\frac{1+ie^{it}}{i+e^{it}}\bigg)^n=\bigg(\frac{\cos t}{1+\sin t}\bigg)^n 
\]
and set $f_n(t)=f(t)\varphi_t(n)$. Then, $f_n\in C^\infty(\mathbb T)$ with $\supp f_n\subseteq D_\alpha$.
Integrating by parts, for every $k\in\mathbb N$, we have
\begin{align}\label{estimate-derivatives}
\begin{split}
|F(m,n)|&= |m|^{-k}\bigg|\frac{1}{2\pi}\int_{\mathbb T} f^{(k)}_n(t) e^{imt}\, dt\bigg|\leqs c_k(1+|m|)^{-k}\| f^{(k)}_n\|_{L^\infty(\mathbb T)}
\end{split}
\end{align}
since
\begin{equation}\label{estimate-derivatives-f}
f^{(k)}_n(t)=\sum_{j=0}^{k}\binom{k}{j}f^{(k-j)}(t) \frac{d^j}{dt^j}(\varphi_t(n))
\end{equation}
and $\supp f^{(k-j)}\subseteq D_\alpha$ for every $j=0,\ldots, k$. Let us consider the extension to the complex plane of $\varphi_t(n)$, that is,
\[
\varphi_z(n)=\bigg(\frac{\cos(z)}{1+\sin(z)}\bigg)^n.
\]
Since $0<\alpha<\pi/2$, such function is holomorphic in the strip $\{z=x+iy, |x|\leqs \alpha+\varepsilon\}$ where $0<\varepsilon<\pi/2-\alpha$. Let $\gamma_{\alpha,\varepsilon}$ be the circumference parameterized by $((\alpha+\varepsilon)\cos\theta,(\alpha+\varepsilon)\sin\theta)$, $\theta \in [0,2\pi)$. Then, by Cauchy's formula, for every $t\in [-\alpha,\alpha]$ we have
\begin{align*}
\frac{d^j}{dt^j}(\varphi_t(n))= \frac{j!}{2\pi}\int_{\gamma_{\alpha,\varepsilon}}\frac{\varphi_\zeta(n)}{(\zeta-t)^{j+1}}\, d\zeta.
\end{align*}
Hence,
\begin{align*}
\bigg|\frac{d^j}{dt^j}(\varphi_t(n))\bigg|\leqs j!(\alpha+\varepsilon)\varepsilon^{-j-1}\sup_{\zeta\in\gamma_{\alpha,\varepsilon}}|\varphi_\zeta(n)|.
\end{align*}
Thus, we want to maximize the constrained function
\[
|\varphi_\zeta(n)|=\bigg|\frac{\cos(x+iy)}{1+\sin(x+iy)}\bigg|^n=\bigg(\frac{\cosh y-\sin x}{\cosh y+\sin x}\bigg)^{\frac n2}, \qquad |\zeta|^2=x^2+y^2=(\alpha+\varepsilon)^2.
\]
One may verify that for $n>0$ the constrained maximum is obtained for $(x,y)=(-(\alpha+\varepsilon),0)$, whereas for $n<0$ it is obtained for $(x,y)=(\alpha+\varepsilon,0)$. In conclusion, for every $n\in\mathbb Z$, we have
\[
\sup_{|\zeta|^2=(\alpha+\varepsilon)^2}|\varphi_\zeta(n)|= \bigg(\frac{\cos(\alpha+\varepsilon)}{1-\sin(\alpha+\varepsilon)}\bigg)^{|n|}=|e_{-(\alpha+\varepsilon)}(0,1)|^{|n|}
\]
Hence, for every $\varepsilon$ with $\alpha+\varepsilon<\pi/2$, we have
\[
\sup_{t\in[-\alpha,\alpha]}\bigg|\frac{d^j}{dt^j}(\varphi_t(n))\bigg|\leqs j! \varepsilon^{-j-1}(\alpha+\varepsilon)\bigg(\frac{\cos(\alpha+\varepsilon)}{1-\sin(\alpha+\varepsilon)}\bigg)^{|n|}.
\]
Therefore, from  \eqref{estimate-derivatives} and \eqref{estimate-derivatives-f}, we get
\begin{align*}
\sup_{t\in[-\alpha,\alpha]}|f^{(k)}_n(t)|&\leqslant \max_{j=0,\ldots, k} \|f^{(j)}\|_{L^\infty} \sum_{j=0}^{k}\binom{k}{j}\sup_{t\in[-\alpha,\alpha]}\bigg|\frac{d^j}{dt^j}(\varphi_t(n))\bigg|\\
&\leqslant \max_{j=0,\ldots, k} \|f^{(j)}\|_{L^\infty}\sum_{j=0}^{k}\binom{k}{j} j! \varepsilon^{-j-1}(\alpha+\varepsilon)\bigg(\frac{\cos(\alpha+\varepsilon)}{1-\sin(\alpha+\varepsilon)}\bigg)^{|n|}\\
&\leqslant c_{k,\varepsilon, \alpha}|e_{-(\alpha+\varepsilon)}(0,1)|^{|n|}.
\end{align*}
We still have to prove the same estimate for $t$ such that $\pi-\alpha\leqs |t|\leqs \pi$. By periodicity, we can actually think to have $t>0$ such that $\pi-\alpha<t<\pi+\alpha$. Repeating the same argument above but this time using the circumference $(\pi+(\alpha+\varepsilon)\cos\theta, \pi+(\alpha+\varepsilon)\sin\theta)$ we obtain the same estimate. In conclusion, from \eqref{estimate-derivatives} we get
\begin{align*}
|F(m,n)|&\leqs c_{k,\varepsilon,\alpha}(1+|m|)^{-k}\bigg(\frac{\cos(\alpha+\varepsilon)}{1-\sin(\alpha+\varepsilon)}\bigg)^{|n|},
\end{align*}
as we wished to show. 
\end{proof}
\subsection{Proof of Theorem \ref{thm:PW-vero}}
We lay down some preliminary results to eventually prove Theorem \ref{thm:PW-vero}. The following simple lemma will be of great use.
\begin{lem}\label{lem:formula} 
Let $F$ be an entire function. Then, for every  $m\in\mathbb Z^+$ and $n\in\mathbb Z$, we have
\begin{equation}
\label{formula_strati}
F(m,n)=(-i)^m \Big(F(0,n)+2i\sum_{k=1}^{m-1}i^k F(k,n-1)+iF(0,n-1)+i^{m+1}F(m,n-1)\Big).
\end{equation}
\end{lem}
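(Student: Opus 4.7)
The plan is to prove the formula by induction on $m\geqs 1$, based on a one-step horizontal recursion derived from the discrete Cauchy--Riemann identity \eqref{eq:holomorphicity}. Replacing $n$ by $n-1$ in \eqref{eq:holomorphicity} and solving for $F(m+1,n)$ yields
\begin{equation*}
F(m+1,n)=-iF(m,n)+F(m,n-1)+iF(m+1,n-1). \tag{$\star$}
\end{equation*}
This is the only analytic input; everything else is algebraic bookkeeping of powers of $\pm i$.

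For the base case $m=1$, the claimed formula reads
\[
F(1,n)=-i\bigl(F(0,n)+iF(0,n-1)+i^{2}F(1,n-1)\bigr),
\]
since the sum $\sum_{k=1}^{0}$ is empty. Using $-i\cdot i=1$ and $-i\cdot i^{2}=i$, the right-hand side collapses to $-iF(0,n)+F(0,n-1)+iF(1,n-1)$, which is exactly ($\star$) with $m$ replaced by $0$.

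For the inductive step, assume \eqref{formula_strati} at level $m$, multiply both sides by $-i$, and add $F(m,n-1)+iF(m+1,n-1)$. By ($\star$) the sum equals $F(m+1,n)$. On the other hand, multiplying the inductive hypothesis by $-i$ turns the prefactor $(-i)^m$ into $(-i)^{m+1}$, and the key identity $(-i)^{m+1}\,i^{m+1}=1$ (so also $(-i)^{m+1}\,i^{m+2}=i$) lets one rewrite
\[
F(m,n-1)+iF(m+1,n-1)=(-i)^{m+1}\bigl(i^{m+1}F(m,n-1)+i^{m+2}F(m+1,n-1)\bigr).
\]
Combining the old term $i^{m+1}F(m,n-1)$ already present inside the parentheses of the inductive hypothesis with this new $i^{m+1}F(m,n-1)$ produces the missing summand $2i\cdot i^{m}F(m,n-1)$ that extends the finite sum from $k=1,\ldots,m-1$ to $k=1,\ldots,m$, while the remaining $i^{m+2}F(m+1,n-1)$ is exactly the new boundary term at level $m+1$. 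This gives \eqref{formula_strati} at level $m+1$.

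The only real obstacle is keeping the powers of $\pm i$ straight, in particular the matching of indices in the sum when the old term $i^{m+1}F(m,n-1)$ combines with the new contribution from ($\star$) to produce the coefficient $2i\cdot i^{m}$. No machinery beyond the discrete Cauchy--Riemann equation and a straightforward induction is needed.
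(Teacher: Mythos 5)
Your proof is correct and takes essentially the same approach as the paper's: induction on $m$, with the discrete Cauchy--Riemann identity rearranged into a one-step horizontal recursion that supplies both the base case $m=1$ and the inductive step. The only cosmetic difference is that you pass from level $m$ to $m+1$ whereas the paper substitutes the hypothesis at $m-1$ into the recursion to obtain level $m$; the bookkeeping of powers of $\pm i$ is identical.
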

\begin{proof}
The desired identity is a direct consequence of holomorphicity and an induction argument. For $m=1$ the identity \eqref{formula_strati} reduces to
\[
F(1,n)=-i\Big(F(0,n)+iF(0,n-1)-F(1,n-1)\Big)
\]
and this is the very definition of holomorphicity in a square of the grid. Hence, it holds true because $F$ is entire. Assume now that \eqref{formula_strati} holds true for $m-1$ and let us prove it for $m$. We have
\begin{align*}
F(m,n)=&F(m-1,n-1)+iF(m,n-1)-iF(m-1,n)\\ 
=&F(m-1,n-1)+iF(m,n-1)\\
&+(-i)^m \Big(F(0,n)+2i\sum_{k=1}^{m-2}i^k F(k,n-1)+iF(0,n-1)+i^{m}F(m-1,n-1)\Big)\\
=& (-i)^m \Big(F(0,n)+2i\sum_{k=1}^{m-1}i^k F(k,n-1)+iF(0,n-1)+i^{m+1}F(m,n-1)\Big),
\end{align*}
as we wished to show.
\end{proof}

\begin{rmk}
The above lemma allows to express $F(m,n)$, for $m\in\mathbb Z^+$ and $n\in\mathbb Z$, using only the value of $F$ in $(0,n)$ and the values of $F$ at height $n-1$. We can easily obtain similar formulas that allow us to express $F(m,n)$  for each $m\in\mathbb Z$ using either the values of $F$ at height $n-1$ or $n+1$. Namely, consider the function 
\[
G(m,n)=(-1)^{m+n}F(-m,n).
\]
Such function is still discrete entire and, applying \eqref{formula_strati} to it, for every $m\in\mathbb Z^+$ and $n\in\mathbb Z$, we obtain
\begin{equation}\label{formula_strati_negativi}
F(-m,n)= i^m\Big(F(0,n)-2i\sum_{k=1}^{m-1} (-i)^k F(-k,n-1)-i F(0,n-1)+(-i)^{m+1} F(-m,n-1)\Big). 
\end{equation}
Similarly, if we apply \eqref{formula_strati} to the entire functions $H(m,n)=(-1)^{m+n}F(m,-n)$ and $L(m,n)=F(-m,-n)$, we get, for every $m\in\mathbb Z^+$ and $n\in\mathbb Z$, 
\begin{equation}\label{formula_strati_sup}
F(m,n)= i^m\Big(F(0,n)-2i\sum_{k=1}^{m-1}(-i)^k F(k,n+1)-iF(0,n+1)+(-i)^{m+1}F(m,n+1)\Big)
\end{equation}
and
\begin{equation}\label{formula_strati_sup_neg}
F(-m,n)=(-i)^m\Big(F(0,n)+2i\sum_{k=1}^{m-1}i^k F(-k,n+1)+iF(0,n+1)+i^{m+1}F(-m,n+1)\Big).
\end{equation}
\end{rmk}

We use Lemma \ref{lem:formula} to deduce that, for every $n\in\mathbb Z$, the set $\mathbb Z\times\{n\}$
is a set of uniqueness for functions in $PW_\alpha$. 

\begin{prop}\label{p:uniqueness}
Let $F$ be in $PW_\alpha$ and let $n\in\mathbb Z$ be fixed. Suppose that 
\[
F(m,n)=0 \quad \text{for every }\quad m\in\mathbb Z,
\]
Then, $F$ is identically zero.
\end{prop}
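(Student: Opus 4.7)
The plan is to propagate the vanishing one horizontal strip at a time, relying only on the discrete Cauchy--Riemann equation \eqref{eq:holomorphicity} together with the $n$-uniform growth bound \eqref{eq:PW_p_i} built into $PW_\alpha$; notably, the square-summability condition \eqref{eq:PW_p_iii} will play no role in the argument.

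First I would feed the hypothesis $F(\cdot,n)\equiv 0$ into \eqref{eq:holomorphicity} at each vertex $(m,n)$, which collapses the equation to
\[
F(m+1,n+1)=-i\,F(m,n+1),\qquad m\in\mathbb Z.
\]
Iterating this two-term recursion in both directions produces the explicit formula $F(m,n+1)=(-i)^m F(0,n+1)$ for every $m\in\mathbb Z$. On its own this does not yet contradict anything, since $|F(m,n+1)|$ stays constant in $m$; the scalar $F(0,n+1)$ is still a free parameter, and the hard part will be to kill it.

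I would then climb one more strip by invoking Lemma \ref{lem:formula}, namely formula \eqref{formula_strati} with $n$ replaced by $n+2$, and plug in the explicit expression for $F(\cdot,n+1)$ just derived. Using the elementary simplifications $i^k(-i)^k=1$ and $i^{m+1}(-i)^m=i$, the sum telescopes to $(m-1)F(0,n+1)$ and, after collecting terms, one obtains
\[
F(m,n+2)=(-i)^m\bigl(F(0,n+2)+2im\,F(0,n+1)\bigr),\qquad m\geqs 1.
\]
The linear-in-$m$ term $2im\,F(0,n+1)$ is incompatible with the uniform-in-$m$ bound $|F(m,n+2)|\leqs c\,|e_{-\alpha}(0,1)|^{|n+2|}$ delivered by \eqref{eq:PW_p_i}. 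Hence $F(0,n+1)=0$, so $F(\cdot,n+1)\equiv 0$, and a straightforward induction produces $F(\cdot,n+k)\equiv 0$ for every $k\geqs 0$.

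The downward step is entirely symmetric: holomorphicity at $(m,n-1)$ forces $F(m,n-1)=i^m F(0,n-1)$, and formula \eqref{formula_strati_sup} applied at height $n-2$ then yields an analogous linear-in-$m$ term $-2im\,F(0,n-1)$ which \eqref{eq:PW_p_i} again rules out. Combining the two halves gives $F\equiv 0$. The main conceptual obstacle, as flagged above, is that discrete holomorphicity propagates vanishing only modulo the scalars $F(0,n\pm 1)$; one must look \emph{two} strips beyond height $n$ before a polynomial-in-$m$ term appears that the growth hypothesis of $PW_\alpha$ is able to kill.
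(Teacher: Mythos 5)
Your proposal is correct and follows essentially the same route as the paper: both derive $F(m,n+1)=(-i)^mF(0,n+1)$ from the vanishing at height $n$, then apply formula \eqref{formula_strati} at height $n+2$ to produce the term $2imF(0,n+1)$, which the boundedness in $m$ guaranteed by \eqref{eq:PW_p_i} forces to vanish, with the downward direction handled symmetrically via \eqref{formula_strati_sup}. The only cosmetic difference is that you obtain the intermediate formula directly from the discrete Cauchy--Riemann relation rather than from Lemma \ref{lem:formula}, and your observation that \eqref{eq:PW_p_iii} is not needed matches the remark the authors make right after their proof.
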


\begin{proof}

Set $A_n=\mathbb Z\times\{n\}$. We prove that if $F|_{A_n}$ is identically zero, then $F|_{A_{n+1}}$ and $F|_{A_{n-1}}$ are identically zero as well. Hence, it follows that $F=0$. 

Since $F|_{A_n}\equiv 0$, it follows from \eqref{formula_strati}  that, for every $m\in\mathbb Z^+$,
\[
F(m,n+1)=(-i)^m F(0,n+1). 
\]
Applying again \eqref{formula_strati} at height $n+2$ we get, for every $m\in\mathbb Z^+$,
\begin{align*}
&F(m,n+2)= (-i)^m\Big(F(0,n+2)+2i\sum_{k=1}^{m-1}i^k F(k,n+1)+iF(0,n+1)+i^{m+1}F(m,n+1)\Big)\\
&=(-i)^m \Big(F(0,n+2)+2i\sum_{k=1}^{m-1}i^k(-i)^kF(0,n+1)+iF(0,n+1)+i^{m+1}(-i)^mF(0,n+1)\Big)\\
&= (-i)^m\Big(F(0,n+2)+2imF(0,n+1) \Big).
\end{align*}
Therefore,
\begin{align*}
|F(m,n+2)|\geqslant 2m|F(0,n+1)|-|F(0,n+2)|.
\end{align*}
However, since $F\in PW_\alpha$, the function $F(\cdot, n+2)$ is bounded thanks to \eqref{eq:PW_p_i}. Therefore, passing to the limit $m\to\infty$, it turns out that $F(0,n+1)$ is necessarily $0$ and, by holomorphicity, we deduce that $F(m,n+1)=0$ for every $m\in\mathbb Z$ as we wished to show. The proof that $F$ vanishes identically on $A_{n-1}$ as well follows similarly using \eqref{formula_strati_sup} instead of  \eqref{formula_strati}.
\end{proof}

Notice that in the above lemma we did not really rely the exponential bound on the size of $F$, but rather only on the boundedness of the function that $F(\cdot,n)$ at every height $n\in\mathbb Z$. An immediate consequence of the above result is the following one.
\begin{cor}
$PW_\alpha$ is a normed vector space.
\end{cor}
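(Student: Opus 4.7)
The plan is straightforward since all the real work has been done in Proposition \ref{p:uniqueness}. To show that $PW_\alpha$ is a normed vector space, I would first verify that it is a vector space, then verify the three norm axioms, isolating the one non-obvious point.

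First, I would check that $PW_\alpha$ is a vector space. The set of discrete entire functions on $\mathbb Z^2$ is clearly a complex vector space because the defining identity \eqref{eq:holomorphicity} is linear. Conditions \eqref{eq:PW_p_i} and \eqref{eq:PW_p_iii} are each preserved under scalar multiplication and under sums (the constant $c$ in \eqref{eq:PW_p_i} and the $\ell^2$ bound in \eqref{eq:PW_p_iii} just get replaced by sums, using the triangle inequality and Minkowski's inequality respectively). Hence $PW_\alpha$ is a linear subspace of the space of functions $\mathbb Z^2 \to \mathbb C$.

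Next, for the norm axioms, the quantity $\|F\|_{PW_\alpha}$ is by construction the $\ell^2(\mathbb Z)$ norm of the restriction $m \mapsto F(m,0)$. Consequently non-negativity, absolute homogeneity $\|cF\|_{PW_\alpha} = |c|\,\|F\|_{PW_\alpha}$, and the triangle inequality all follow for free from the fact that $\ell^2(\mathbb Z)$ is a normed space. The only axiom that requires comment is positive definiteness: from $\|F\|_{PW_\alpha} = 0$ one a priori obtains only $F(m,0) = 0$ for all $m \in \mathbb Z$, not $F \equiv 0$ on $\mathbb Z^2$.

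The key step is therefore to invoke Proposition \ref{p:uniqueness} with $n = 0$: vanishing on the horizontal line $\mathbb Z \times \{0\}$ forces $F$ to vanish identically, so $\|F\|_{PW_\alpha} = 0$ does imply $F = 0$. This is the only place where the discrete holomorphicity is actually used, and it is precisely the content already proved; there is no new obstacle to overcome. I would write the argument in one short paragraph with the bullet ``$\|F\|_{PW_\alpha} = 0 \Rightarrow F(\cdot,0) \equiv 0 \Rightarrow F \equiv 0$ by Proposition \ref{p:uniqueness}'' as the decisive line.
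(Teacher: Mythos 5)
Your proposal is correct and follows essentially the same route as the paper: the authors likewise note that the only nontrivial point is positive definiteness of \eqref{PW_norm}, and they deduce $\|F\|_{PW_\alpha}=0 \Rightarrow F\equiv 0$ from Proposition \ref{p:uniqueness} applied at height $n=0$. Your additional remarks on the vector space structure and the remaining norm axioms are routine and consistent with what the paper leaves implicit.
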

\begin{proof}
The only thing that requires some attention is the fact that $\eqref{PW_norm}$ defines a norm on $PW_\alpha$. In particular, we need to prove that $\|F\|_{PW_\alpha}=0$ implies $F\equiv 0$. This follows at once from the previous proposition since $\|F\|_{PW_\alpha}=0$ implies $F(m,0)$ for every $m\in\mathbb Z$.
\end{proof}

The following result is the key to proving Theorem \ref{thm:PW-vero}.
\begin{thm}\label{thm:PW_decay_ascoli}
Let $F$ be a function in $PW_\alpha$. Then, for every $n\in\mathbb Z$,
\[
\lim_{|m|\to+\infty}F(m,n)=0.
\]
\end{thm}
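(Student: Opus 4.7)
\medskip

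\noindent\textbf{Proof plan.} The plan is to argue by compactness, exploiting translation invariance of the discrete Cauchy--Riemann equation \eqref{eq:holomorphicity} together with the uniqueness result in Proposition \ref{p:uniqueness}. The starting observation is that, since $F \in PW_\alpha$, the sequence $\{F(m,0)\}_{m\in\mathbb Z}$ is in $\ell^2(\mathbb Z)$, so in particular $F(m,0)\to 0$ as $|m|\to\infty$. The task is then to propagate this decay from the line $n=0$ to every horizontal line $\mathbb Z\times\{n\}$, using only the uniform exponential bound \eqref{eq:PW_p_i}.

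Suppose by contradiction that $F(m,n_0)\not\to 0$ as $|m|\to\infty$ for some $n_0\in\mathbb Z$. Then there exist $\delta>0$ and a sequence $|m_k|\to\infty$ with $|F(m_k,n_0)|\geqs \delta$ for all $k$. I would consider the horizontally translated functions $F_k(m,n)=F(m+m_k,n)$, which are again discrete entire since \eqref{eq:holomorphicity} is invariant under horizontal translations, and which satisfy the uniform bound $|F_k(m,n)|\leqs c|e_{-\alpha}(0,1)|^{|n|}$ coming from \eqref{eq:PW_p_i}. Since $\mathbb Z^2$ is countable and the family is pointwise bounded, a standard diagonal extraction produces a subsequence (still denoted $F_k$) such that $F_k(m,n)\to G(m,n)$ for every $(m,n)\in\mathbb Z^2$. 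The pointwise limit $G$ inherits discrete entirety (the identity \eqref{eq:holomorphicity} passes to pointwise limits) and the same exponential bound, so in particular $G(\cdot,n)$ is bounded for each $n$.

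At height $n=0$, $G(m,0)=\lim_{k\to\infty} F(m+m_k,0)=0$ for every $m\in\mathbb Z$, because $F(\cdot,0)\in\ell^2(\mathbb Z)$ forces $F(j,0)\to 0$ as $|j|\to\infty$ and $|m+m_k|\to\infty$ for fixed $m$. In particular $G(\cdot,0)\in \ell^2(\mathbb Z)$, so $G\in PW_\alpha$ and Proposition \ref{p:uniqueness} applies; alternatively one can invoke the proof of Proposition \ref{p:uniqueness} directly, since as remarked right after it, only the boundedness of each horizontal slice of $G$ is used. Either way, $G\equiv 0$. But then $F(m_k,n_0)=F_k(0,n_0)\to G(0,n_0)=0$, contradicting $|F(m_k,n_0)|\geqs \delta$. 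Hence no such sequence exists and $F(m,n)\to 0$ as $|m|\to\infty$ for every $n\in\mathbb Z$.

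The main point to double-check is the applicability of the uniqueness step to $G$: a priori $G$ is only a pointwise limit with uniformly bounded slices, so one must rely on the fact (already highlighted in the paper) that the uniqueness argument of Proposition \ref{p:uniqueness} only needs slice-wise boundedness rather than the full $PW_\alpha$ structure. Everything else is routine compactness and translation invariance; no delicate estimate on the explicit formula \eqref{formula_strati} is needed beyond what Proposition \ref{p:uniqueness} already provides.
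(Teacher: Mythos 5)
Your proposal is correct and follows essentially the same route as the paper: pointwise-bounded translates, diagonal extraction, identification of the limit as a discrete entire function with bounded slices vanishing at height $n=0$, and the uniqueness argument of Proposition \ref{p:uniqueness} (which, as you correctly note, only uses slice-wise boundedness). The only cosmetic differences are that you phrase it as a contradiction along a bad sequence $m_k$ and extract on all of $\mathbb Z^2$ at once, whereas the paper works with the full sequence of translates strip by strip and then upgrades subsequential convergence to convergence of the whole sequence.
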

The analogous result in the continuous setting follows by combining several classical results such as Phragm\'en--Lindel\"of-type theorems and Montel's theorem, see e.g. \cite[Theorem 12, Chapter 2]{Young}.
It is possible to exploit such technology in order to obtain even stronger results, such as the Plancherel--P\'olya inequality and its consequences, see e.g. \cite[Theorems 16 and 17, Chapter 2]{Young}.
In the discrete setting we partially have such a refined technology (see, e.g., \cite{ZD, Guadie, GM, BLMS}) at our disposal, but we do not actually need it.
Let us first recall the following result, which follows from a standard diagonalization process. 
\begin{thm}[see, e.g., {\cite[Theorem 7.23]{Rudin}}] \label{thm_rudin}
If $\{f_k\}_k$ is a pointwise bounded sequence of complex functions on a countable set $E$, then $\{f_k\}_k$ has a subsequence $\{f_{k_j}\}_{j}$ such that $\{f_{k_j}(x)\}_j$ converges for every  $x$ in $E$.
\end{thm}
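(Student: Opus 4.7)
The plan is to prove this by the standard Cantor diagonal construction, using Bolzano--Weierstrass in $\bC$ at each step.

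First I would enumerate the countable set as $E=\{x_1,x_2,x_3,\ldots\}$ (allowing a finite enumeration, in which case the argument terminates early and is trivial). The sequence $\{f_k(x_1)\}_k$ is a bounded sequence in $\bC$ by hypothesis, so by Bolzano--Weierstrass there exists a strictly increasing map $\sigma_1:\bN\to\bN$ such that $\{f_{\sigma_1(k)}(x_1)\}_k$ converges. Inductively, once $\sigma_1,\ldots,\sigma_j$ have been constructed so that $\{f_{(\sigma_j\circ\cdots\circ\sigma_1)(k)}(x_i)\}_k$ converges for every $i=1,\ldots,j$, I apply Bolzano--Weierstrass again to the bounded sequence $\{f_{(\sigma_j\circ\cdots\circ\sigma_1)(k)}(x_{j+1})\}_k$ to obtain $\sigma_{j+1}$ extracting a further convergent subsequence at $x_{j+1}$. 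Writing $\tau_j=\sigma_j\circ\cdots\circ\sigma_1$, this produces a nested family of subsequences $\{f_{\tau_j(k)}\}_k$ such that $\{f_{\tau_j(k)}\}_k$ converges pointwise on $\{x_1,\ldots,x_j\}$.

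Next I would perform the diagonal extraction: define $k_j=\tau_j(j)$. The key point is that $\{k_j\}_j$ is strictly increasing, which follows because each $\sigma_i$ is strictly increasing and $\tau_{j+1}(j+1)=\tau_j(\sigma_{j+1}(j+1))>\tau_j(j)$ (since $\sigma_{j+1}(j+1)\geqs j+1>j$ and $\tau_j$ is strictly increasing). Hence $\{f_{k_j}\}_j$ is a genuine subsequence of $\{f_k\}_k$. For any fixed $x_i\in E$, the tail $\{f_{k_j}\}_{j\geqs i}$ is a subsequence of $\{f_{\tau_i(k)}\}_k$, because for $j\geqs i$ we can write $k_j=\tau_i(m_j)$ for some strictly increasing $\{m_j\}$ (namely $m_j=(\sigma_{i+1}\circ\cdots\circ\sigma_j)(j)$). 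Since $\{f_{\tau_i(k)}(x_i)\}_k$ converges, so does $\{f_{k_j}(x_i)\}_j$, which gives pointwise convergence on all of $E$.

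There is really no hard step here; the only place that requires care is checking that the diagonal indices $k_j=\tau_j(j)$ form a strictly increasing sequence of positive integers and that, for each $i$, the diagonal sequence from index $i$ onward is indeed a subsequence of the $i$-th refinement. Both facts follow immediately from the strict monotonicity of the $\sigma_j$'s, so I expect the proof to be short and purely combinatorial, with no analytic difficulty beyond invoking Bolzano--Weierstrass at each stage.
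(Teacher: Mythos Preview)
Your argument is correct and is precisely the standard Cantor diagonalization the paper alludes to; the paper does not give its own proof but simply cites Rudin and remarks that the result ``follows from a standard diagonalization process.'' There is nothing to add.
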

Exploiting the above result we prove Theorem \ref{thm:PW_decay_ascoli}.
\begin{proof}[Proof of Theorem \ref{thm:PW_decay_ascoli}]
Let $n_0$ in $\mathbb N$ be fixed and define the strip $S_{n_0}=\{(m,n): m\in\mathbb Z, |n|\leqslant n_0\}$. Set $F_k(m,n)=F(m+k,n)$ and consider the sequence $\{F_k\}_{k\in\mathbb N}$. Such sequence is bounded on $S_{n_0}$ because of \eqref{eq:PW_p_i}. Hence, thanks to Theorem \ref{thm_rudin}, there exists a subsequence $\{F_{k_j}\}_{j\in\mathbb N}$ such that $\{F_{k_j}(m,n)\}_{j\in\mathbb N}$ converges for every $(m,n)$ in $S_{n_0}$. 
Set
\[
G(m,n)=\lim_{j\to+\infty}F_{k_j}(m,n)=\lim_{j\to+\infty} F(m+k_j,n).
\]
Then, $G$ is holomorphic in the strip $S_{n_0}$. In fact, for every $(m,n)\in S_{n_0-1}$ it holds 
\begin{align*}
G(m+1,&n+1)-G(m,n)+i\big(G(m,n+1)-G(m+1,n)\big)\\
&=\lim_{j\to+\infty}\Big( F_{k_j}(m+1,n+1)-F_{k_j}(m,n)+i\big(F_{k_j}(m,n+1)-F_{k_j}(m+1,n)\big)\Big)=0
\end{align*}
since each $F_{k_j}$ is entire. Notice now that for every $m\in\mathbb Z$,
\[
G(m,0)= \lim_{j\to+\infty} F_{k_j}(m,0)=\lim_{j\to+\infty}F(m+k_j,0)=0
\]
since $F$ satisfies \eqref{eq:PW_p_iii}.
Therefore, $G$ is a a holomorphic function in the strip $S_{n_0}$ which is identically zero at height $n=0$. Moreover, $G$ is clearly bounded on $S_{n_0+1}$. Adapting the proof of Proposition \ref{p:uniqueness} one can then deduce that such a function $G$ has to be identically zero on the whole strip $S_{n_0}$. 

We proved that the sequence $\{F_k\}_{k\in\mathbb N}$ admits pointwise converging subsequences in the strip $S_{n_0}$ and that each of such subsequences converges pointwise necessarily to  $0$ in $S_{n_0}$. It is then a standard fact to prove that the sequence $\{F_k\}_{k\in\mathbb N}$ itself actually pointwise converges to $0$ for every $(m,n)\in S_{n_0}$. Since the above arguments hold true for every strip $S_{n_0}$, we can conclude that 
\[
\lim_{k\to+\infty} F_k(m,n)=\lim_{k\to+\infty}F(m+k,n)=0
\]
for every $(m,n)\in\mathbb Z^2$. To conclude the proof it is enough to repeat the argument considering the sequence of functions 
$\{F_{-k}\}_{k\in\mathbb N}$, where $F_{-k}(m,n)=F(m-k,n)$.
\end{proof}

We are ready to prove Theorem \ref{thm:PW-vero}. 
\begin{proof}[Proof of Theorem \ref{thm:PW-vero}]
Let $f$ be in $L^2(\mathbb T)$ with $\supp f\subseteq D_\alpha$. Then,
\[
F(m,n)=\frac{1}{2\pi}\int_{\mathbb T} f(t) e_t(m,n)\, dt= \frac{1}{2\pi}\int_{D_\alpha} f(t) e_t(m,n)\, dt
\]
is well-defined and
\begin{align*}
|F(m,n)|
&\leqslant (2\pi)^{-1}  \|f\|_{L^2(\mathbb T)} |e_{-\alpha}(0,1)|^{|n|}.
\end{align*}
Thus, $F$ satisfies \eqref{eq:PW_p_i}. Moreover,
by Plancherel's theorem,
\[
\|F\|_{PW_\alpha}=\|f\|_{L^2(\mathbb T)}<+\infty.
\]
Hence, $F$ satisfies also \eqref{eq:PW_p_iii}. In conclusion, $F$ is a function in $PW_\alpha$ as we wished to show.

Conversely, let $F$ be in $PW_\alpha$. The function
\[
f(t)=\sum_{m\in\mathbb Z} F(m,0)e^{-imt}
\]
is a well-defined $L^2$ function. We want to prove that $\supp f_0\subseteq D_\alpha$; equivalently, that $f_0(\beta)=0$ whenever $\alpha<|\beta|<\pi-\alpha$. Let us first assume $\alpha<\beta<\pi-\alpha$ and let $C_R$ be the boundary of the discrete rectangle 
\[
\{(m,n)\in\mathbb Z^2: -R\leqslant R\leqslant m, \,0\leqslant n\leqslant R\}.
\]
Since $e_{\beta}$ and $F$ are both entire functions, by Proposition \ref{p:integral}, we have
\[
\int_{C_R} e_{\beta}: F=0,
\]
where the integral on the left-hand side is the contour integral defined in \eqref{eq:contour_integral}.
In particular
\begin{align}\label{eq:PW1}
\begin{split}
0
&=\sum_{m=-R}^{R-1}(e^{i\beta m}+e^{i\beta(m+1)})(F(m,0)+F(m+1,0))+4\int_{C'_{R}} e_{\beta}: F,
\end{split}
\end{align}
where $C_{R'}$ is the portion of the contour of $C_R$ in the upper half-plane. Taking the limit as $R\to+\infty$, thanks to the almost everywhere convergence of $L^2$ Fourier series, we get
\begin{align*}
0&= 2f(-\beta)(1+\cos(\beta))+4\lim_{R\to+\infty}\int_{C'_{R}} e_{\beta}: F.
\end{align*}
We claim that the limit of the contour integral along $C'_{R}$ is zero. Assuming the claim, we obtain
\[
0=2(1+\cos(\beta))f(-\beta).
\]
Hence, necessarily, it holds $f(-\beta)=0$ for almost every $\beta$ such that $\alpha<\beta<\pi-\alpha$. It remains to prove the claim. We have
\begin{align*}
4\int_{C_{R'}} e_{\beta}: F&=i\sum_{n=0}^{R-1}\big(e_{\beta}(R,n)+e_{\beta}(R,n+1)\big)\big(F(R,n)+F(R,n+1)\big)\\
&\quad -\sum_{m=-R}^{R-1}\big(e_{\beta}(m,R)+e_{\beta}(m+1,R)\big)\big(F(m,R)+F(m+1,R)\big)\\
&\quad -i\sum_{n=0}^{R-1}\big(e_{\beta}(-R,n+1)+e_{\beta}(-R, n)\big)\big(F(-R,n+1)+F(-R, n)\big)\\
&=I_R-I\!I_R-I\!I\!I_R.
\end{align*}
From \eqref{eq:PW_p_i} we get 
\begin{align*}
|I_R|& \leqslant c\sum_{n=0}^{R-1}\big(|e_{\beta}(0,1)|^n+|e_{\beta}(0,1)|^{n+1}\big)\big(|e_{-\alpha}(0,1)|^n+|e_{-\alpha}(0,1)|^{n+1}\big)\\
&\leqs c_{\alpha,\beta}\sum_{n=0}^{+\infty}\big(|e_{-\alpha}(0,1)||e_{\beta}(0,1)|\big)^n<+\infty
\end{align*}
since $|e_{-\alpha}(0,1)||e_{\beta}(0,1)|<1$. Therefore, by the dominated convergence theorem and Theorem \ref{thm:PW_decay_ascoli}, we get
\begin{align*}
\lim_{R\to+\infty} |I_R|&\leqs \lim_{R\to+\infty}\sum_{n=0}^{R-1}\big(|e_{\beta}(0,n)|+|e_{\beta}(0,n+1)|\big)\big(|F(R,n)|+|F(R,n+1)|\big)=0.
\end{align*}
With a similar argument as above it also follow that $I\!I\!I_R\to 0$ as $R\to+\infty$. Lastly, we have
\begin{align*}
|I\!I_R|&\leqs \sum_{m=-R}^{R-1}\big(|e_{\beta}(m,R)|+|e_{\beta}(m+1,R)|\big)\big(|F(m,R)|+|F(m+1,R)|\big)\\
&\leqs c \sum_{m=-R}^{R-1}\big(|e_{-\alpha}(0,1)||e_{\beta}(0,1)|\big)^R\to 0
\end{align*}
as $R\to+\infty$. In conclusion, we get the claim
\[
\lim_{R\to+\infty} \int_{C'_R} e_{\beta}(m,n): F(m,n) \, dz=0.
\]
If we repeat a similar argument considering the rectangle $R$ in the lower half-plane, then we conclude that $f(\beta)=0$ for almost every $\beta$ in $(\alpha,\pi-\alpha)$ as well. It remains to prove the representation formula \eqref{eq:PW}. Set
\[
G(m,n)=\frac{1}{2\pi}\int_{\mathbb T}f(t) e_t(m,n)\, dt.
\]
This function $G$ is well-defined on $\mathbb Z^2$ since $\supp f\subseteq D_\alpha$ and it is clearly entire. Moreover, 
\[
G(m,0)=\frac{1}{2\pi}\int_{\mathbb T} f(t)e^{imt}\, dt= F(m,0)
\]
for every $m\in\mathbb Z$. Therefore, by Proposition \ref{p:uniqueness}, the functions $G$ and $F$ coincide on the whole grid. 
\end{proof}

As an immediate consequence of Theorem \ref{thm:PW-vero} we obtain that $PW_\alpha$ is a reproducing kernel Hilbert space.  
\begin{cor}\label{kernel}
$PW_\alpha$ is a reproducing kernel Hilbert space. Its reproducing kernel is given by  
\begin{align*}
K_{(m,n)}(u,v)&=\frac{1}{2\pi}\int_{D_\alpha} e_t(u-m,v+n)\, dt=\frac{1}{2\pi}\int_{D_\alpha}
  e^{i(u-m)t}\bigg(\frac{1+ie^{it}}{i+e^{it}}\bigg)^{v+n}\, dt.
\end{align*}
\end{cor}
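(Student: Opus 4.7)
The plan is to exploit Theorem \ref{thm:PW-vero} to transport the Hilbert space structure and reproducing property from $L^2(\mathbb T)$ to $PW_\alpha$, and then perform one explicit computation to identify the kernel. First, I would observe that Theorem \ref{thm:PW-vero} asserts that the map $f \mapsto F$ defined by \eqref{eq:PW} is a bijection between the closed subspace $L^2_{D_\alpha}(\mathbb T) := \{f \in L^2(\mathbb T) : \supp f \subseteq D_\alpha\}$ and $PW_\alpha$, and the identity \eqref{eq:isometry} says that (up to the factor $(2\pi)^{-1/2}$) this bijection is isometric. Since $L^2_{D_\alpha}(\mathbb T)$ is a closed subspace of $L^2(\mathbb T)$, it is a Hilbert space; pulling back the inner product through the bijection shows that $PW_\alpha$ is a Hilbert space with
\begin{equation*}
\langle F, G\rangle_{PW_\alpha} = \sum_{m\in\mathbb Z} F(m,0)\overline{G(m,0)} = \frac{1}{2\pi}\int_{D_\alpha} f(t)\overline{g(t)}\, dt,
\end{equation*}
obtained by polarization of \eqref{eq:isometry}.

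Next, I would read off the reproducing kernel by rewriting point evaluation as an $L^2$-inner product. Fix $(m,n)\in\mathbb Z^2$. By \eqref{eq:PW}
\begin{equation*}
F(m,n) = \frac{1}{2\pi}\int_{\mathbb T} f(t)\, e_t(m,n)\, dt = \frac{1}{2\pi}\int_{\mathbb T} f(t)\, \overline{\overline{e_t(m,n)}\mathbf{1}_{D_\alpha}(t)}\, dt,
\end{equation*}
and since $e_t(m,n)$ is bounded on $D_\alpha$, the function $t\mapsto \overline{e_t(m,n)}\mathbf{1}_{D_\alpha}(t)$ belongs to $L^2_{D_\alpha}(\mathbb T)$. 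Hence it is the image under the isomorphism of some element $K_{(m,n)}\in PW_\alpha$, and the display above reads $F(m,n) = \langle F, K_{(m,n)}\rangle_{PW_\alpha}$. Applying the representation formula \eqref{eq:PW} to $K_{(m,n)}$ yields
\begin{equation*}
K_{(m,n)}(u,v) = \frac{1}{2\pi}\int_{D_\alpha} \overline{e_t(m,n)}\, e_t(u,v)\, dt.
\end{equation*}

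The final step, which is the only point requiring any computation, is to simplify the integrand. I would exploit the alternative form
\begin{equation*}
\varphi_t(n) = \bigg(\frac{\cos t}{1+\sin t}\bigg)^n
\end{equation*}
recalled in the proof of Theorem \ref{thm:PW}, which is real-valued for real $t\in\mathbb T\setminus\{\pm\pi/2\}$. Therefore $\overline{e_t(m,n)} = e^{-imt}\varphi_t(n) = e_t(-m,n)$, and since $\varphi_t(n)\varphi_t(v) = \varphi_t(n+v)$ by the multiplicative definition,
\begin{equation*}
\overline{e_t(m,n)}\, e_t(u,v) = e^{i(u-m)t}\varphi_t(n+v) = e_t(u-m, v+n),
\end{equation*}
which substituted into the previous display gives exactly the claimed formula.

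The main (mild) obstacle is the verification of the conjugation identity $\overline{e_t(m,n)} = e_t(-m,n)$: one must notice that although $\varphi_t(n)$ is written as a ratio of complex expressions in \eqref{d:discrete-exponential}, it is in fact real on $\mathbb T\setminus\{\pm\pi/2\}$ thanks to the simplification recorded in the proof of Theorem \ref{thm:PW}. Everything else is a direct translation of the isometric isomorphism of Theorem \ref{thm:PW-vero}.
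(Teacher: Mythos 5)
Your proposal is correct and follows essentially the same route as the paper: both use the isometry of Theorem \ref{thm:PW-vero} to transfer the Hilbert space structure from $L^2_{D_\alpha}(\mathbb T)$, identify the kernel on the Fourier side as $\overline{k_{(m,n)}(t)}=1_{D_\alpha}(t)e_t(m,n)$, and then use that $\varphi_t(n)=\bigl(\cos t/(1+\sin t)\bigr)^n$ is real and multiplicative in $n$ to collapse $\overline{e_t(m,n)}\,e_t(u,v)$ into $e_t(u-m,v+n)$. The only cosmetic difference is that you exhibit the representing element directly (which gives boundedness of point evaluations for free), whereas the paper first checks boundedness via the estimate $|F(m,n)|\leqslant c\,|e_{-\alpha}(0,1)|^{|n|}\|F\|_{PW_\alpha}$ and then matches the abstract kernel with \eqref{eq:PW}.
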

\begin{proof}
Theorem \ref{thm:PW-vero} guarantees that 
\[
F\mapsto \frac{1}{2\pi}\int_{\mathbb T}f(t) e_t(\cdot,\cdot)\, dt
\]
is a surjective isometry from $PW_\alpha$ onto the space
\[
\big\{f\in\ L^2(\mathbb T): \supp f\subseteq D_\alpha\big\}.
\]
Thus, $PW_\alpha$ is a Hilbert space endowed with the obvious inner product associated with the norm \eqref{PW_norm}. Moreover, by the reproducing formula \eqref{eq:PW}, we get
\begin{align*}
|F(m,n)|&=\frac{1}{2\pi}\bigg|\int_{D_\alpha} f(t) e_t(m,n)\, dt\bigg|\leqs c |e_{-\alpha}(0,1)|^{|n|}\|f\|_{L^2(\mathbb T)}= c|e_{-\alpha}(0,1)|^{|n|}\|F\|_{PW_\alpha}.
\end{align*}
Thus, the point-evaluation functionals are bounded on $PW_\alpha$. Equivalently, $PW_\alpha$ is a reproducing kernel Hilbert space. The kernel $K_{(m,n)}$ is immediately obtained with a standard argument. On one hand, we have
\begin{align}\label{eq:identity_kernel}
F(m,n)&=\langle F, K_{(m,n)}\rangle_{PW_\alpha}= \sum_{u\in\mathbb Z} F(u,0)\overline{K_{(m,n)}(u,0)}=\frac{1}{2\pi}\int_{D_\alpha} f(t) \overline{k_{(m,n)}(t)}\, dt,
\end{align}
where the last identity is guaranteed by Theorem \ref{thm:PW-vero} and $k_{(m,n)}$ is the Fourier transform of $K_{(m,n)}(\cdot,0)$ in $PW_\alpha$. On the other hand, matching \eqref{eq:identity_kernel} with \eqref{eq:PW} we obtain
\begin{align*}
\overline{k_{(m,n)}(t)}=  1_{D_\alpha}(t) e_t(m,n),
\end{align*}
where $1_{D_\alpha}$ is the characteristic function of $D_\alpha$.
Hence,
\begin{align*}
K_{(m,n)}(u,v)&=\frac{1}{2\pi}\int_{\mathbb T} k_{m,n}(t) e_t(u,v)\, dt
=\frac{1}{2\pi}\int_{D_\alpha} e_t(u-m,v+n)\, dt
\end{align*}
as we wished to show.
\end{proof}

Notice that the reproducing formula of $F$ via the kernel is of the form
\[
F(m,n)=\sum_{u\in\mathbb Z}F(u,0)K_{(m,n)}(u,0).
\]
In the special case of $v=-n$ the reproducing kernel is given by
\begin{align*}
K_{(m,n)}(u,-n )&=\frac{1}{2\pi}\int_{D_\alpha} e^{it(u-m)}=\frac{\alpha}{\pi}\big(1+(-1)^{u-m}\big)\sinc (\alpha(u-m))
\end{align*}
with $\sinc(x)=\frac{\sin(x)}{x}$. In particular, we obtain the identity
\begin{equation}\label{eq:kernel_sinc}
F(m,0)=\frac{\alpha}{\pi}\sum_{u\in\mathbb Z}F(u,0)\big(1+(-1)^{u-m}\big)\sinc(\alpha(u-m)), 
\end{equation}
 which is reminiscent of the classical cardinal sine series of the Whittaker--Kotelnikov--Shannon  sampling theorem.

Another immediate consequence of Theorem \ref{thm:PW-vero} is the following Plancherel--P\'olya type inequality.
\begin{cor}[Plancherel--P\'olya inequality]
Let $F$ be a function in $PW_\alpha$. Then,
\[
\sum_{m\in\mathbb Z} |F(m,n)|^2\leqs \bigg(\frac{\cos\alpha}{1-\sin\alpha}\bigg)^{2|n|}\|F\|_{PW_\alpha}^2 
\]
for every $n$ in $\mathbb Z$.
\end{cor}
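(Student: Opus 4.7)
The plan is to reduce the sum $\sum_{m\in\mathbb Z}|F(m,n)|^2$ to an integral on $D_\alpha$ via the isometry in Theorem~\ref{thm:PW-vero}, and then pull out a uniform pointwise bound on the multiplier $|\varphi_t(n)|$ over $D_\alpha$. The argument is essentially Plancherel composed with an elementary trigonometric estimate; no deep machinery is needed.

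First, given $F\in PW_\alpha$, Theorem~\ref{thm:PW-vero} produces $f\in L^2(\mathbb T)$ with $\supp f\subseteq D_\alpha$ such that
$$F(m,n)=\frac{1}{2\pi}\int_{\mathbb T} f(t)\, e^{imt}\varphi_t(n)\,dt,$$
with $\|F\|_{PW_\alpha}=(2\pi)^{-1/2}\|f\|_{L^2(\mathbb T)}$. Thus, for fixed $n$, the sequence $\{F(m,n)\}_{m\in\mathbb Z}$ is the sequence of Fourier coefficients of the $L^2(\mathbb T)$ function $t\mapsto f(t)\varphi_t(n)\,1_{D_\alpha}(t)$ (the truncation is harmless since $f$ already vanishes outside $D_\alpha$). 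Plancherel's theorem on $\mathbb T$ then gives
$$\sum_{m\in\mathbb Z}|F(m,n)|^2=\frac{1}{2\pi}\int_{D_\alpha}|f(t)|^2\,|\varphi_t(n)|^2\,dt.$$

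Second, bound $|\varphi_t(n)|$ uniformly on $D_\alpha$. A direct computation using $|1+ie^{it}|^2=2-2\sin t$ and $|i+e^{it}|^2=2+2\sin t$ yields
$$|\varphi_t(n)|^2=\bigl(\tfrac{1-\sin t}{1+\sin t}\bigr)^{n}.$$
On $D_\alpha=[-\alpha,\alpha]\cup\{\pi-\alpha\leqs|t|\leqs\pi\}$ one has $|\sin t|\leqs \sin\alpha$, so $\tfrac{1-\sin t}{1+\sin t}$ ranges in $\bigl[\tfrac{1-\sin\alpha}{1+\sin\alpha},\tfrac{1+\sin\alpha}{1-\sin\alpha}\bigr]$. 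Taking the extremal value for the sign of $n$ and using the identity $\tfrac{1+\sin\alpha}{1-\sin\alpha}=\bigl(\tfrac{\cos\alpha}{1-\sin\alpha}\bigr)^{2}$, we obtain in every case
$$|\varphi_t(n)|\leqs \Bigl(\frac{\cos\alpha}{1-\sin\alpha}\Bigr)^{|n|}.$$

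Finally, combine the two steps: pull the constant out of the integral in the Plancherel identity to get
$$\sum_{m\in\mathbb Z}|F(m,n)|^2\leqs \Bigl(\frac{\cos\alpha}{1-\sin\alpha}\Bigr)^{2|n|}\frac{1}{2\pi}\int_{D_\alpha}|f(t)|^2\,dt=\Bigl(\frac{\cos\alpha}{1-\sin\alpha}\Bigr)^{2|n|}\|F\|_{PW_\alpha}^2,$$
which is the claimed inequality. The only step requiring care is the uniform bound on $|\varphi_t(n)|$, and even that reduces to the observation $|\sin t|\leqs\sin\alpha$ on $D_\alpha$, so there is no genuine obstacle.
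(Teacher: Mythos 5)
Your proof is correct and is exactly the argument the paper intends: the corollary is stated as an immediate consequence of Theorem \ref{thm:PW-vero} (isometry plus Plancherel), combined with the bound $\sup_{t\in D_\alpha}|\varphi_t(n)|=\big(\tfrac{\cos\alpha}{1-\sin\alpha}\big)^{|n|}$ already used in the proofs of Theorems \ref{thm:PW} and \ref{thm:PW-vero}. Your computation $|\varphi_t(n)|^2=\big(\tfrac{1-\sin t}{1+\sin t}\big)^{n}$ and the identity $\tfrac{1+\sin\alpha}{1-\sin\alpha}=\big(\tfrac{\cos\alpha}{1-\sin\alpha}\big)^{2}$ check out on all of $D_\alpha$, including the component near $\pm\pi$.
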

\subsection{Proof of Theorem \ref{thm:decay_zero}}
The proof of \ref{thm:decay_zero} is essentially contained in the following lemma. 
\begin{lem}\label{lem:L1} 
 Let $F$ be a function in $PW_\alpha$. Assume that for every $k\in\mathbb N$ there exists $c_k>0$ such that
\[
|F(m,0)|\leqslant c_k (1+|m|)^{-k}.
\]
Then, for every $n$ in $\mathbb Z^+$ we have 
\begin{equation}\label{eq:value_y}
F(0,n)=-i F(0,n-1)-2i\sum_{k=1}^{+\infty}i^k F(k,n-1).
\end{equation}
Moreover, for $m$ in $\mathbb Z^+$ and every $n$ in $\mathbb Z^+$, 
\begin{equation}\label{eq:value_y_2}
|F(m,n)|\leqs 2^n \sum_{\ell=0}^{+\infty} (\ell+1)^{|n|-1}|F(m+\ell,0)| 
\end{equation}
and
\begin{equation}\label{eq:F_L1}
\sum_{m\in\mathbb Z^+}|F(m,n)|<+\infty.
\end{equation}
 \end{lem}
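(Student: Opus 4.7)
The plan is to establish the three claims iteratively, handling the subtle interplay between them. The identity \eqref{eq:value_y} at level $n$ requires absolute summability of $F(\cdot,n-1)$, the pointwise bound \eqref{eq:value_y_2} uses \eqref{eq:value_y}, and \eqref{eq:F_L1} is an immediate consequence of \eqref{eq:value_y_2}. So I proceed by induction on $n\in\mathbb Z^+$, the base $n=0$ being \eqref{eq:decay_zero}, which already gives $\sum_{m\in\mathbb Z}|F(m,0)|<+\infty$.

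\emph{Step 1 (proof of \eqref{eq:value_y}).} Isolating $F(0,n)$ in \eqref{formula_strati} yields
\[
F(0,n)=(-i)^{-m}F(m,n)-2i\sum_{k=1}^{m-1}i^kF(k,n-1)-iF(0,n-1)-i^{m+1}F(m,n-1).
\]
By Theorem \ref{thm:PW_decay_ascoli}, both $F(m,n)$ and $F(m,n-1)$ tend to zero as $m\to+\infty$, so the two boundary terms vanish. The inductive hypothesis \eqref{eq:F_L1} at level $n-1$ guarantees that $\sum_{k\geqs 1}|F(k,n-1)|<+\infty$, so passing to the limit is legitimate and yields \eqref{eq:value_y}.

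\emph{Step 2 (the one-step estimate).} The key algebraic step is to substitute \eqref{eq:value_y} back into \eqref{formula_strati}: the partial sum $\sum_{k=1}^{m-1}i^kF(k,n-1)$ combines with the full sum in \eqref{eq:value_y} to leave $\sum_{k=m}^{+\infty}$; splitting off the $k=m$ term and combining it with $i^{m+1}F(m,n-1)$ telescopes to
\[
F(m,n)=-iF(m,n-1)-2i(-i)^m\sum_{k=m+1}^{+\infty}i^kF(k,n-1),
\]
using the identity $(-i)^m i^{m+1}=i$. Taking absolute values and reindexing $k=m+\ell$ gives the clean one-step bound
\[
|F(m,n)|\leqs |F(m,n-1)|+2\sum_{\ell=1}^{+\infty}|F(m+\ell,n-1)|\leqs 2\sum_{\ell=0}^{+\infty}|F(m+\ell,n-1)|.
\]

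\emph{Step 3 (proof of \eqref{eq:value_y_2}).} Iterating the one-step estimate $n$ times one obtains
\[
|F(m,n)|\leqs 2^n\!\!\sum_{\ell_1,\dots,\ell_n\geqs 0}|F(m+\ell_1+\cdots+\ell_n,0)|=2^n\sum_{L=0}^{+\infty}\binom{L+n-1}{n-1}|F(m+L,0)|,
\]
the count coming from stars-and-bars. The elementary inequality $L+k\leqs k(L+1)$ for $k\geqs 1$ and $L\geqs 0$ yields $\binom{L+n-1}{n-1}\leqs (L+1)^{n-1}$, which converts the above into \eqref{eq:value_y_2}.

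\emph{Step 4 (proof of \eqref{eq:F_L1}).} Summing \eqref{eq:value_y_2} over $m\in\mathbb Z^+$ and interchanging the order of summation (legitimate by positivity), one gets
\[
\sum_{m\in\mathbb Z^+}|F(m,n)|\leqs 2^n\sum_{L=0}^{+\infty}(L+1)^{n-1}\sum_{j=L+1}^{+\infty}|F(j,0)|,
\]
and the rapid decay hypothesis \eqref{eq:decay_zero} makes $\sum_{j=L+1}^{+\infty}|F(j,0)|$ decay faster than any polynomial in $L$, so the outer series converges. This closes the induction. The main obstacle is Step 2: the bookkeeping that recombines \eqref{formula_strati} with \eqref{eq:value_y} into the telescoped form is the only place where cancellations must be carried out carefully; every subsequent step is routine once the one-step bound is in hand.
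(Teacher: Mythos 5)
Your proof is correct, and its overall architecture (induction on $n$, the layer formula \eqref{formula_strati}, substitution of \eqref{eq:value_y} back into it, and the polynomial weight $(\ell+1)^{n-1}$) coincides with the paper's. The one place where you genuinely diverge is the derivation of \eqref{eq:value_y}: you isolate $F(0,n)$ in \eqref{formula_strati} and kill the two boundary terms $i^{m}F(m,n)$ and $i^{m+1}F(m,n-1)$ by invoking Theorem \ref{thm:PW_decay_ascoli}, which is available here since the lemma assumes $F\in PW_\alpha$ and that theorem is established beforehand with no circularity. The paper instead avoids Theorem \ref{thm:PW_decay_ascoli} entirely at this step: it takes limits of \eqref{formula_strati} along the residue classes $m=4s+j$ to identify a constant $A_0$ with $\lim_s F(4s+j,n)=(-i)^jA_0$, and then shows $A_0=0$ by a Ces\`aro-type argument one level up --- the partial sums $\sum_{k<m}\beta_k$ appearing in $F(4m+1,n+1)$ stay bounded by \eqref{eq:PW_p_i} while $\beta_k\to 4A_0$, forcing $A_0=0$. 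Your route is shorter and more transparent; the paper's buys a proof of \eqref{eq:value_y} that uses only the uniform bound \eqref{eq:PW_p_i} at each height rather than the full decay statement of Theorem \ref{thm:PW_decay_ascoli}. Two small points worth making explicit if you write this up: the telescoping in Step 2 (absorbing the $k=m$ term into $i^{m+1}F(m,n-1)$ via $(-i)^mi^{m+1}=i$) is exactly what makes the constant $2$ rather than $3$ in the one-step bound, and you do carry this out correctly; and your stars-and-bars count together with $\binom{L+n-1}{n-1}\leqs (L+1)^{n-1}$ is a clean equivalent of the paper's inductive estimate $\sum_{j=0}^{L}(j+1)^{n-2}\leqs (L+1)^{n-1}$.
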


\begin{proof}[Proof of Lemma \ref{lem:L1}]
We proceed by induction on $n$ in $\mathbb  Z^+$. Assume that $n=1$. Then, by \eqref{formula_strati}, for every $m$  in $\mathbb Z^+$, we have
\begin{equation}\label{eq: F n=1}
    F(m,1)=(-i)^m\Big(F(0,1)+2i\sum_{k=1}^{m-1}i^k F(k,0)+iF(0,0)+i^{m+1}F(m,0)\Big).
\end{equation}
Set $m=4s+j$ with $j=0,1,2,3$. Since $F$ is rapidly decreasing at height $n=0$ we get
\begin{align}\label{eq:lim_uno}
\begin{split}
\lim_{s\to+\infty}F(4s+j,1)&=(-i)^{j}\Big(F(0,1)+2i\sum_{k=1}^{+\infty} i^kF(k,0)+iF(0,0)\Big)=:(-i)^j A_0.
\end{split}
\end{align}
Suppose that $A_0\neq 0$. Then,
at level $n=2$, by \eqref{formula_strati}, we have
\begin{align}\label{estimate}
\begin{split}
F(4m+1,2)&=(-i)\Big(F(0,2)+2i\sum_{k=1}^{4m}i^k F(k,1)+iF(0,1)+i^{4m+2}F(4m+1,1)\Big)\\
&=-iF(0,2)+2i\sum_{k=0}^{m-1}\beta_k+F(0,1)+iF(4m+1,1),
\end{split}
\end{align}
where
\[
\beta_k=iF(4k+1,1)+i^{2}F(4k+2,1)+i^{3}F(4k+3,1)+F(4k+4,1). 
\]
Notice that  from \eqref{eq:lim_uno} it follows
\begin{equation}\label{eq:beta_k_lim}
\lim_{k\to+\infty} \beta_k= 4A_0,
\end{equation}
while from \eqref{estimate} we get 
\begin{equation}\label{bound}
\Big|\sum_{k=0}^{m-1}\beta_k\Big|\leqs M_1+M_2<+\infty,
\end{equation}
where $M_n=\sup_{m\in\mathbb Z}|F(m,n)|$. Combining \eqref{eq:beta_k_lim} and \eqref{bound} one deduces that $\beta_k\to 0$ as $k\to+\infty$, that is, $A_0=0.$

Thus, from 
\eqref{eq:lim_uno} we obtain
\[
F(0,1)=-iF(0,0)-2i\sum_{k=1}^{+\infty} i^k F(k,0),
\]
which is \eqref{eq:value_y} for $n=1$. Notice that, plugging \eqref{eq:value_y} in \eqref{eq: F n=1}, we obtain
\begin{align*}
F(m,1)&=(-i)^m\Big(-iF(0,0)-2i\sum_{k=1}^{+\infty} i^k F(k,0)+2i\sum_{k=1}^{m-1}i^k F(k,0)+iF(0,0)+i^{m+1}F(m,0)\Big)\\
&=(-i)^m \Big(-2i\sum_{k=m}^{+\infty} i^k F(k,0)+i^{m+1}F(m,0)\Big). 
\end{align*}
Hence,
\[
|F(m,1)|\leqs 2 \sum_{k=m}^{+\infty} |F(k,0)|,
\]
which is \eqref{eq:value_y_2}.
Moreover, 
\begin{align}\label{eq:F1_L1}
\begin{split}
\sum_{m=1}^{+\infty}|F(m,1)|&\leqs 2 \sum_{m=1}^{+\infty}\sum_{k=m}^{+\infty}|F(k,0)|= 2 \sum_{k=1}^{+\infty}\sum_{m=1}^{k}|F(k,0)|\leqs 2 \sum_{k=1}^{+\infty}k|F(k,0)|<+\infty,
\end{split}
\end{align}
where the la series converges since $F(\cdot,0)$ is rapidly decreasing. In conclusion, \eqref{eq:F_L1} holds as well for $n=1$.
Let us now assume that \eqref{eq:value_y}, \eqref{eq:value_y_2} and \eqref{eq:F_L1}   hold true at height $n-1$ and prove them at height $n$. 
Equation \eqref{eq:value_y} follows by adapting the same argument as above since it only requires the absolute convergence of the series $\sum_{k=1}^{+\infty} i^kF(k,n-1)$ and this is guaranteed by \eqref{eq:F_L1}
 and the inductive hypothesis. By plugging \eqref{eq:value_y} in \eqref{formula_strati} and the inductive hypothesis, we get
 \begin{align*}
|F(m,n)|&=\Big|-2i\sum_{k=m}^{+\infty}i^k F(k,n-1)+i^{m+1} F(m,n-1)\Big|\\
&\leqslant 2\sum_{k=m}^{+\infty}|F(k,n-1)|\\
&\leqs 2^n\sum_{k=m}^{+\infty} \sum_{\ell=0}^{+\infty}(\ell+1)^{n-2}|F(\ell+k,0)|\\
&\leqs c_n\sum_{\ell=m}^{+\infty} (\ell-m+1)^{n-1} |F(\ell,0)|\\
&= 2^n \sum_{\ell=0}^{+\infty}(\ell+1)^{n-1}|F(\ell+m,0)|.
 \end{align*}
Thus, \eqref{eq:value_y_2} holds. Finally,
\begin{align*}
\sum_{m=1}^{\infty}|F(m,n)|&\leqs c_n \sum_{m=1}^{\infty}\sum_{\ell=0}^{+\infty}(\ell+1)^{n-1}|F(\ell+m,0)|\\
&= 2^n \sum_{\ell=1}^{\infty}\sum_{m=1}^{\ell}(\ell-m+1)^{n-1}|F(\ell,0)|\\
&\leqs c_n \sum_{\ell=1}^{\infty}\ell^n |F(\ell,0)|<+\infty
\end{align*}
since $F$ is rapidly decreasing at level $n=0$. Thus, \eqref{eq:F_L1} holds as well and the proof is complete.
\end{proof}

We now prove Theorem \ref{thm:decay_zero}.

\begin{proof}[Proof of Theorem \ref{thm:decay_zero}]
Applying the previous lemma to
$G(m,n)=(-1)^{m+n}F(-m,n)$, for every $n$  in $\mathbb Z^+$, we obtain
\[
\sum_{m\in\mathbb Z^+}|F(-m,n)|<+\infty. 
\]
This and \eqref{eq:F_L1} imply
\[
\lim_{|m|\to+\infty}|F(m,n)|=0
\]
for every positive $n$ in $\mathbb Z^+$. It remains to prove the result for $n$  in $\mathbb Z^-$, but it follows similarly as above applying Lemma \ref{lem:L1} to $H(m,n)=(-1)^{m+n}F(m,-n)$ and $L(m,n)=F(-m,-n)$, respectively. We omit the details.

\end{proof}

\section{Sampling in $PW_\alpha$}\label{s:sampling}
In this section we address the problem of a sufficient condition for a sequence to be sampling for $PW_\alpha$. However, before proving Theorem \ref{thm: Sampling}, we also easily derive a necessary condition 
by applying a very general result proved in \cite{RomeroLondon}.

\subsection{Necessary condition for sampling}
We consider $\mathbb{Z}$ as a metric measure space, endowed with the counting measure and the graph distance, whose balls are denoted by $B(m,r)$. One can easily verify that such metric measure space satisfies the hypothesis of \cite[Theorem 1.1]{RomeroLondon}. Hence, if $\Lambda\subseteq \mathbb Z$ is a sampling sequence for $PW_\alpha$, then the aforementioned result assures that the lower Beurling density satisfies the inequality
\begin{equation}\label{eq:sampling_nec}
D^-(\Lambda):= \liminf_{r\to+\infty}\inf_{m\in\mathbb{Z}}\frac{\#(\Lambda\cap B(m,r))}{\#B(m,r)}\geq \liminf_{r\to+\infty}\inf_{m\in\mathbb{Z}}\frac{1}{\#B(m,r)}\sum_{n\in B(m,r)}K_{(n,0)}(n,0)=\frac {2\alpha}{\pi},
\end{equation}
where the last identity follows from the fact that $K_{(n,0)}(n,0)=2\alpha/\pi$ for every $n\in\mathbb{Z}$, see \eqref{eq:kernel_sinc}. 

To better compare~\eqref{eq:sampling_nec} with the condition given in  Theorem~\ref{thm: Sampling} let us also assume that $\Lambda$ has a very easy configuration. Namely, let us assume that $\Lambda$ is of the form
\[
\Lambda=\delta_e\bZ\sqcup(n+\delta_o\bZ).
\]
where $\delta_e$ and $\delta_o$ are even integers and $n$ is some odd integer. 
We now want to compute $D^-(\Lambda)$. Notice that given $m\in\bZ$ and $r>0$,
\[
 2\Bigl\lfloor\frac r{\delta_e}\Bigr\rfloor\leqs\#\bigl(\Lambda\cap B(m,r)\cap 2\mathbb Z\bigr)\leqslant2\left(\Bigl\lfloor\frac r{\delta_e}\Bigr\rfloor+1\right)
 \]
 and similarly for $\#\bigl(\Lambda\cap B(m,r)\cap (2\mathbb Z+1)\bigr)$. Hence,

 \[
 2\left(\Bigl\lfloor\frac r{\delta_e}\Bigr\rfloor+\Bigl\lfloor\frac r{\delta_o}\Bigr\rfloor\right)\leqs\#\bigl(\Lambda\cap B(m,r)\bigr)\leqslant2\left(\Bigl\lfloor\frac r{\delta_e}\Bigr\rfloor+\Bigl\lfloor\frac r{\delta_o}\Bigr\rfloor+2\right).
 \]
However, we know that $\# B(m,r)=2r+1$. So that
\begin{align*}
    \frac 1{\delta_e}+\frac 1{\delta_o}=\lim_{r\to\infty}\frac 2{2r+1}\Bigl(\Bigl\lfloor\frac r{\delta_e}\Bigr\rfloor+\Bigl\lfloor\frac r{\delta_o}\Bigr\rfloor\Bigr)\leqslant D^-(\Lambda)\leqslant \lim_{r\to\infty}\frac 2{2r+1}\Bigl(\Bigl\lfloor\frac r{\delta_e}\Bigr\rfloor+\Bigl\lfloor\frac r{\delta_o}\Bigr\rfloor+2\Bigr)=\frac 1{\delta_e}+\frac 1{\delta_o},
\end{align*}
from which we conclude that if $\Lambda$ is a sampling set for $PW_\alpha$ then
\begin{equation}\label{eq:necessaria_delte}
\frac 1{\delta_e}+\frac 1{\delta_o}\geq \frac {2\alpha}\pi. 
\end{equation}

\subsection{Sufficient condition for sampling} 
In this last section we prove Theorem \ref{thm: Sampling}.  We apply well-established classical results to  provide a reconstructive algorithm, from which Theorem \ref{thm: Sampling} will follow. The proof we provide is a discrete adaptation of the approach presented in~\cite{G-irregular}, which has already been partially applied in a discrete periodic setting in~\cite{G-discrete}.
The main idea is to provide an approximation operator $A$ on $PW_\alpha$ such that the operator norm $\|\id-A\|_{PW_\alpha}$ is smaller than one. This would guarantee the invertibility of the operator $A$ and the validity of the following lemma.
\begin{lem}[{\cite[Lemma 3 ]{G-discrete}}]\label{l:G}
Let $A$ be a bounded operator on $PW_\alpha$ such that the operator norm $\|\id-A\|_{PW_\alpha}$ is smaller than one. Then $A$ is invertible and every $F\in PW_\alpha$ can be reconstructed by the iteration 
\[
\varphi_0= AF,\quad \varphi_{k+1}=\varphi_k-A\varphi_k,\quad F=\sum_{k=0}^{+\infty}\varphi_k
\]
with convergence in $PW_\alpha$.
\end{lem}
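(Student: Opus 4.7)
The plan is the standard Neumann series argument, tailored to the specific iteration. Setting $B:=\id-A$, the hypothesis reads $\|B\|_{PW_\alpha\to PW_\alpha}<1$. The first step is to observe that the partial sums $S_N:=\sum_{k=0}^N B^k$ form a Cauchy sequence in the Banach algebra of bounded operators on $PW_\alpha$, since for $M>N$ one has
\[
\|S_M-S_N\|\leqs\sum_{k=N+1}^{M}\|B\|^k\leqs\frac{\|B\|^{N+1}}{1-\|B\|},
\]
so $S_N$ converges in operator norm to some bounded operator $S$. A telescoping computation gives $A\,S_N=(\id-B)S_N=\id-B^{N+1}$, and passing to the limit (using $\|B^{N+1}\|\to 0$) yields $AS=\id$; the symmetric computation $S_NA=\id-B^{N+1}$ gives $SA=\id$. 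Hence $A$ is invertible with $A^{-1}=\sum_{k=0}^{+\infty}B^k$.

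The second step is to identify the iterates with the Neumann partial sums applied to $AF$. I claim that $\varphi_k=B^k(AF)$ for every $k\geqs 0$. This is true by construction for $k=0$, and the recursion gives
\[
\varphi_{k+1}=\varphi_k-A\varphi_k=(\id-A)\varphi_k=B\varphi_k=B^{k+1}(AF),
\]
completing the induction. Consequently the $N$-th partial sum of the series $\sum_k\varphi_k$ equals $S_N(AF)$.

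Finally, since $S_N\to A^{-1}$ in operator norm, we have
\[
\Bigl\|\sum_{k=0}^N\varphi_k-F\Bigr\|_{PW_\alpha}=\bigl\|S_N(AF)-A^{-1}(AF)\bigr\|_{PW_\alpha}\leqs\|S_N-A^{-1}\|\cdot\|AF\|_{PW_\alpha}\longrightarrow 0,
\]
which gives the reconstruction formula with convergence in the $PW_\alpha$-norm. There is no real obstacle here: everything reduces to the geometric series bound $\|B^k\|\leqs\|B\|^k$ and completeness of $PW_\alpha$ (a Hilbert space by Corollary~\ref{kernel}), so that the Banach algebra of bounded operators on it is complete. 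The only minor point to keep in mind is that the iteration is stated as a first-order recursion, not directly as a partial sum, so the short induction identifying $\varphi_k$ with $B^k(AF)$ is the one piece of bookkeeping that must be carried out explicitly.
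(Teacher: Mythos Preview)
Your argument is correct and is exactly the standard Neumann series proof. Note, however, that the paper does not give its own proof of this lemma: it is quoted from \cite[Lemma~3]{G-discrete} and used as a black box, so there is nothing to compare against beyond observing that your write-up supplies the routine details the paper omits.
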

Let us now introduce an approximation operator $A$. Concerning notation, from now on, for every function $F$ defined on the grid $\mathbb Z^2$ we set $F(m)= F(m,0)$.  Consider $\Lambda\subseteq\mathbb Z$ and let $\{p_j\}_{j\in\mathbb Z}, \{q_j\}_{j\in\mathbb Z},\delta_e,
\delta_o$ be defined as in Theorem \ref{thm: Sampling}. 
Also, set 
\begin{equation}\label{eq:I_j}
I_j=2\mathbb Z\cap [p_j,p_{j+1})
\end{equation}
and 
\begin{equation}\label{eq:L_j}
L_j=(2\mathbb Z+1)\cap [q_j,q_{j+1}).
\end{equation}
Introduce the operators 
\begin{align*}
&T_j^eF(m)=\Biggl[F(p_j)+\frac{F(p_{j+1})-F(p_j)}{p_{j+1}-p_j}(m-p_j)\Biggr]1_{I_j}(m),\\
&
T_j^oF(m)=\Biggl[F(q_j)+\frac{F(q_{j+1})-F(q_j)}{q_{j+1}-q_j}(m-q_j)\Biggr]1_{L_j}(m)
\end{align*}
and set
\begin{align*}
&TF=\sum_{j\in\mathbb Z}T_j^eF+\sum_{j\in\mathbb Z}T_j^oF.
\end{align*}
Observe that, by definition, $TF(\lambda)=F(\lambda)$ for every $\lambda$ in $\Lambda$. 
Finally, set
 \[
 A=PT,
 \]
 where $P$ denotes the orthogonal projection $P\colon L^2(\mathbb Z)\to PW_\alpha$. Such a projection exists since $PW_\alpha$ is a closed subspace of $L^2(\mathbb Z)$, as a consequence of Theorem~\ref{thm:PW-vero}. 
We prove the following lemma.
\begin{lem}\label{l:norm_Id_A}
Let $\Lambda$ be a sequence of integers and set $\delta=\max(\delta_e,\delta_o)$. Then, for every $F\in PW_\alpha$,
\begin{equation}\label{eq:A_1}
\|AF\|_{PW_\alpha}\leqs 2\sqrt\delta \Big(\sum_{\lambda\in \Lambda} |F(\lambda)|^2 \Big)^{\frac12}
\end{equation}
and 
\begin{equation}\label{eq:A_2}
\|F-AF\|_{PW_\alpha}\leqslant \frac{\sin^2\alpha}{\sin^2(\frac{\pi}{\delta})}\|F\|_{PW_\alpha}.
\end{equation}
\end{lem}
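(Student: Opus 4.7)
My plan is to realize $PW_\alpha$ as a closed subspace of $\ell^2(\mathbb Z)$ via the isometric embedding $F\mapsto F(\cdot,0)$ provided by Theorem \ref{thm:PW-vero}, so that the orthogonal projection $P$ has operator norm one. Since $PF=F$ for every $F\in PW_\alpha$, this yields the two reductions
\[
\|AF\|_{PW_\alpha}\leqs\|TF\|_{\ell^2(\mathbb Z)},\qquad \|F-AF\|_{PW_\alpha}=\|P(F-TF)\|_{PW_\alpha}\leqs\|F-TF\|_{\ell^2(\mathbb Z)},
\]
so it suffices to prove the analogous bounds for $T$ in the ambient $\ell^2(\mathbb Z)$-norm.

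For \eqref{eq:A_1} I will use that the sets $\{I_j\}_j\cup\{L_j\}_j$ partition $\mathbb Z$ thanks to the even/odd dichotomy, so that $TF$ decomposes as a sum with pairwise disjoint supports. On each $I_j$ the value $T_j^e F(m)$ is a convex combination of $F(p_j)$ and $F(p_{j+1})$, giving $|T_j^e F(m)|^2\leqs 2(|F(p_j)|^2+|F(p_{j+1})|^2)$. Since $\#I_j\leqs\delta_e/2$, summing first in $m\in I_j$ and then in $j$ (each $F(p_j)$ contributes to at most two consecutive blocks) produces
\[
\sum_j\|T_j^e F\|^2_{\ell^2(I_j)}\leqs 2\delta_e\sum_{\lambda\in\Lambda\cap 2\mathbb Z}|F(\lambda)|^2,
\]
and similarly for the odd part; combining them and taking the square root gives \eqref{eq:A_1} (in fact with the sharper constant $\sqrt{2\delta}$).

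The heart of the argument is \eqref{eq:A_2}. Let $e=F-TF$. On each block $\{p_j,p_j+2,\dots,p_{j+1}\}$ the interpolation construction forces $e(p_j)=e(p_{j+1})=0$, and since $TF$ is affine there, $\Delta^2 e(m)=\Delta^2 F(m)$ at every interior node, where $\Delta^2 u(m):=u(m+2)-2u(m)+u(m-2)$. Reindexing the block as $\{0,1,\dots,M_j\}$ with $M_j=(p_{j+1}-p_j)/2$ makes $e$ the solution of a Dirichlet problem for the discrete one-dimensional Laplacian, whose smallest eigenvalue is $4\sin^2(\pi/(2M_j))$. Diagonalizing in the corresponding eigenbasis yields the Friedrichs-type inequality
\[
\sum_{m\in I_j}|e(m)|^2\leqs\frac{1}{16\sin^4(\pi/(2M_j))}\sum_{m\in(p_j,p_{j+1})\cap 2\mathbb Z}|\Delta^2 F(m)|^2.
\]
Since $M_j\leqs\delta/2$ and $\delta\geqs 2$, both $\pi/(2M_j)$ and $\pi/\delta$ lie in $(0,\pi/2]$; monotonicity of $\sin$ uniformly replaces the denominator $\sin^4(\pi/(2M_j))$ by $\sin^4(\pi/\delta)$. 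Summing over $j$ on both sublattices (the interiors disjointly cover the complement of $\Lambda$ in $\mathbb Z$) yields
\[
\|F-TF\|^2_{\ell^2(\mathbb Z)}\leqs\frac{1}{16\sin^4(\pi/\delta)}\|\Delta^2 F\|^2_{\ell^2(\mathbb Z)}.
\]

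To finish I will invoke a discrete Bernstein-type inequality. By Theorem \ref{thm:PW-vero}, $F(m,0)=\frac{1}{2\pi}\int_{\mathbb T} f(t)e^{imt}\,dt$ with $\supp f\subseteq D_\alpha$, and the Fourier symbol of $\Delta^2$ is $e^{2it}-2+e^{-2it}=-4\sin^2 t$. The estimate $|\sin t|\leqs\sin\alpha$ holds on $D_\alpha$ (the identity $\sin(\pi-t)=\sin t$ handles the second lobe), so Parseval gives $\|\Delta^2 F\|^2_{\ell^2(\mathbb Z)}\leqs 16\sin^4\alpha\,\|F\|^2_{PW_\alpha}$. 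Substituting into the previous display and taking square roots produces \eqref{eq:A_2}. The step I expect to be the main obstacle is the Dirichlet Friedrichs inequality with the sharp eigenvalue constant $4\sin^2(\pi/(2M_j))$ together with the bookkeeping of vanishing endpoint values across consecutive blocks; everything else is convexity and a one-line Parseval computation.
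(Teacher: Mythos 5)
Your proposal is correct and follows essentially the same route as the paper: reduce to the interpolation operator $T$ via the norm-one projection $P$, split by parity into disjointly supported blocks, use convexity for \eqref{eq:A_1}, and for \eqref{eq:A_2} exploit that the interpolation error vanishes at block endpoints and has the same second differences as $F$, then apply a block Wirtinger/Friedrichs inequality with constant $16^{-1}\sin^{-4}(\pi/(p_{j+1}-p_j))$ followed by the step-two Bernstein bound $\|\nabla_2^2F\|\leqs 4\sin^2\alpha\,\|F\|_{PW_\alpha}$ on $D_\alpha$. The only (harmless) difference is that you derive the block inequality spectrally from the discrete Dirichlet Laplacian, whereas the paper quotes it as the discrete Wirtinger inequality of \cite{FTT} (Theorem \ref{thm: Wirt}); these are the same estimate.
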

From Lemma \ref{l:G}
 and Lemma \ref{l:norm_Id_A} the proof of Theorem \ref{thm: Sampling} follows immediately by a standard argument. 
 \begin{proof}[Proof of Theorem \ref{thm: Sampling}]
The right-hand inequality in \eqref{eq:sampling} is clearly trivial. The left-hand inequality follows combining Lemma \ref{l:G} and Lemma \ref{l:norm_Id_A}. In fact, notice that if $\delta=\max(\delta_e,\delta_0)<\pi/\alpha$, then $\frac{\sin^4\alpha}{\sin^4(\frac{\pi}{\delta})}$ is strictly smaller than one, so that the operator $A$ in invertible. Thus, 
\begin{align*}
\|F\|^2_{PW_\alpha}=\| A^{-1}AF\|^2_{PW_\alpha}\leqs 4\delta\|A^{-1}\|_{PW_\alpha}^2 \sum_{\lambda\in \Lambda}|F(\lambda)|^2.
\end{align*}
 \end{proof}

In only remains to prove  Lemma \ref{l:norm_Id_A}. To do so  we need two main ingredients, a ``step--two'' Bernstein inequality and a  discrete Wirtinger inequality. 

\begin{rmk}[Bernstein's inequality]\label{rmk:bernstein}
    Define the operator 
    \[\nabla_2F(m)=F(m)-F(m+2).
    \]
Let $f$ the Fourier transform on the torus of the function $F$ in  $PW_\alpha$. Then, the following Bernstein inequality holds true,
        \begin{align}\label{eq:Bernstein}
        \|\nabla_2F\|_{PW_\alpha} ^2&=\sum_{m\in\mathbb Z}|F(m+2)-F(m)|^2
        =\frac{1}{2\pi}\int_{D_\alpha}|f(t)|^2|e^{2it}-1|^2 dt
        \leqs 4\sin^2\alpha\|F\|^2_{PW_\alpha},
    \end{align}
  since $\sup_{t\in D_\alpha}|e^{2it}-1|^2=4\sin^2\alpha$. 
    In what follows, we are going to use the iterated operator 
    \[ \nabla_2^2F(m)=\nabla_2(\nabla_2F)(m)=F(m+4)-2F(m+2)+F(m) 
    \]
    and the inequality  \begin{equation}\label{eq: Bern}
    \|\nabla_2^2F\|_{PW_\alpha}\leqslant 4\sin^2\alpha\|F\|_{PW_\alpha}.\end{equation}

    Observe that by choosing the operator $\nabla_1F(m)=F(m+1)-F(m)$ one cannot obtain an inequality similar to \eqref{eq:Bernstein} with the constant depending on $\alpha$ since
    \begin{align*}
\|\nabla_1 F\|^2_{PW_\alpha}= \sum_{m\in\mathbb Z}|F(m+1)-F(m)|^2=\frac{1}{2\pi}\int_{D_\alpha} |f(t)|^2|e^{it}-1|^2\, dt
    \end{align*}
    and  $\sup_{t\in D_\alpha} |e^{it}-1|^2= 4$.
The fact that we have a meaningful Bernstein inequality for the operator $\nabla_2$ and not for $\nabla_1$ is the reason why the approximation operator $T$ consists of two parts, one concerning the even integers of the sequence $\Lambda$ and one concerning the odd ones. We are actually approximating separately the restriction of $F$ to the even integers and the odd integers. This is the reason why in our Theorem \ref{thm: Sampling} a condition that involves both $\delta_e$ and $\delta_o$ appears.
\end{rmk}

The second ingredient we need is a discrete version of the Wirtinger inequality.
\begin{thm}[{\cite[Theorem 11]{FTT}}]\label{thm: Wirt}
    Given $s(0),\dots,s(N)\in\mathbb C$ such that $s(0)=s(N)=0$, then     
    \[
    \sum_{\ell =0}^{N}|s(\ell) |^2\leqslant \frac 1{16\sin^4\bigl(\frac{\pi}{2N}\bigr)}\sum_{\ell =0}^{N-2}|\nabla_1^2s(\ell)|^2. \]
\end{thm}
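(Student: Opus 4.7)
The plan is to prove the inequality by diagonalizing the operator $\nabla_1^2$ on the finite-dimensional Hilbert space of complex sequences on $\{0,1,\dots,N\}$ vanishing at the two endpoints, using the discrete Dirichlet sine basis. The $(N-1)$-dimensional space in question is naturally parametrized by $(s(1),\dots,s(N-1))\in\mathbb C^{N-1}$, and the natural candidate eigenvectors of $\nabla_1^2$ subject to the Dirichlet conditions are
\[
e_k(\ell)=\sin\!\Bigl(\frac{k\pi\ell}{N}\Bigr),\qquad \ell=0,\dots,N,\quad k=1,\dots,N-1,
\]
which manifestly satisfy $e_k(0)=e_k(N)=0$.

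First I would compute $\nabla_1^2 e_k$ directly via the sum-to-product identity: expanding
\[
\sin\!\Bigl(\tfrac{k\pi(\ell+2)}{N}\Bigr)+\sin\!\Bigl(\tfrac{k\pi\ell}{N}\Bigr)=2\sin\!\Bigl(\tfrac{k\pi(\ell+1)}{N}\Bigr)\cos\!\Bigl(\tfrac{k\pi}{N}\Bigr)
\]
and using $\cos(k\pi/N)-1=-2\sin^2(k\pi/(2N))$ yields
\[
\nabla_1^2 e_k(\ell)=-4\sin^2\!\Bigl(\tfrac{k\pi}{2N}\Bigr)\,\sin\!\Bigl(\tfrac{k\pi(\ell+1)}{N}\Bigr).
\]
Next I would invoke the discrete orthogonality
\[
\sum_{m=1}^{N-1}\sin\!\Bigl(\tfrac{k\pi m}{N}\Bigr)\sin\!\Bigl(\tfrac{j\pi m}{N}\Bigr)=\frac{N}{2}\,\delta_{kj},\qquad k,j\in\{1,\dots,N-1\},
\]
to conclude that $\{e_k\}_{k=1}^{N-1}$ is an orthogonal basis of the space of sequences with $s(0)=s(N)=0$, and moreover, after reindexing $m=\ell+1$, that the same orthogonality relation holds for the shifted sines $\sin(k\pi(\ell+1)/N)$ as $\ell$ runs over $\{0,\dots,N-2\}$.

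Writing $s=\sum_{k=1}^{N-1} c_k\,e_k$, the two sides of the claimed inequality then become diagonal quadratic forms in the coefficients $c_k$:
\[
\sum_{\ell=0}^{N}|s(\ell)|^2=\frac{N}{2}\sum_{k=1}^{N-1}|c_k|^2,\qquad \sum_{\ell=0}^{N-2}|\nabla_1^2 s(\ell)|^2=\frac{N}{2}\sum_{k=1}^{N-1}16\sin^4\!\Bigl(\tfrac{k\pi}{2N}\Bigr)|c_k|^2.
\]
Since $k\mapsto\sin^4(k\pi/(2N))$ is strictly increasing on $\{1,\dots,N-1\}$ (all arguments lie in $(0,\pi/2)$), the ratio of the two sums is maximized when all mass is placed on the lowest mode $k=1$, so
\[
\sum_{\ell=0}^{N}|s(\ell)|^2\;\leqs\;\frac{1}{16\sin^4\!\bigl(\tfrac{\pi}{2N}\bigr)}\sum_{\ell=0}^{N-2}|\nabla_1^2 s(\ell)|^2,
\]
with equality iff $s$ is a multiple of $e_1$. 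The only real subtleties are the two bookkeeping points: verifying that the shifted sines $\sin(k\pi(\ell+1)/N)$ remain orthogonal over $\ell=0,\dots,N-2$ (which is the same as orthogonality of $\sin(k\pi m/N)$ over $m=1,\dots,N-1$), and confirming that $e_1$ is indeed the eigenvector realizing the sharp constant — both of which are straightforward once the spectral decomposition is set up.
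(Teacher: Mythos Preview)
Your spectral argument is correct: the discrete sine basis diagonalizes $\nabla_1^2$ with Dirichlet boundary conditions, the computation $\nabla_1^2 e_k(\ell)=-4\sin^2(k\pi/(2N))\,e_k(\ell+1)$ is right, and after the reindexing $m=\ell+1$ both quadratic forms become diagonal with the ratio controlled by the lowest eigenvalue $16\sin^4(\pi/(2N))$, giving exactly the stated constant with equality on $e_1$.

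Note, however, that the paper does not supply its own proof of this statement: Theorem~\ref{thm: Wirt} is quoted from \cite[Theorem 11]{FTT} and used as a black box in the proof of Lemma~\ref{l:norm_Id_A}. So there is no in-paper argument to compare against; your diagonalization proof simply fills in what the paper outsources to the reference.
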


We are ready to prove Lemma \ref{l:norm_Id_A}. 
\begin{proof}[Proof of Lemma \ref{l:norm_Id_A}]
Notice that 
\[
\|AF\|_{PW_\alpha}^2\leqs\|TF\|^2_{L^2}= \Big\|\sum_{j\in\mathbb Z} T^e_j F\Big\|^2_{L^2}+\Big\|\sum_{j\in\mathbb Z} T^o_j F\Big\|^2_{L^2}.
\]
Now,
\begin{align*}
    \Big\|\sum_{j\in\mathbb Z} T_j^eF\Big\|_2^2
   &= \sum_{m\in\mathbb Z} \Big|\sum_{j\in\mathbb Z} T^e_j F(m)\Big|^2
    \\
    &=\sum_{m\in\mathbb Z} \Big|\sum_{j\in\mathbb Z}\Big[F(p_j)+\frac{F(p_{j+1})-F(p_j)}{p_{j+1}-p_j}(m-p_j)\Big]1_{I_j}(m)\Big|^2\\
    &= \sum_{j\in\mathbb Z}\sum_{m\in I_j}\Big|F(p_j)+\frac{F(p_{j+1})-F(p_j)}{p_{j+1}-p_j}(m-p_j)\Big|^2\\
    &\leqs 2\sum_{j\in\mathbb Z}\sum_{m\in I_j}\left(\Big|\frac{F(p_{j})(p_{j+1}-m)}{p_{j+1}-p_j}\Big|^2+\Big|\frac{F(p_{j+1})(p_{j}-m)}{p_{j+1}-p_j}\Big|^2\right)\\
    &\leqs 2 \delta_e\sum_{j\in\mathbb Z}(|F(p_j)|^2+|F(p_{j+1})|^2).
\end{align*}
Similarly, we obtain
\[
 \Big\|\sum_{j\in\mathbb Z} T_j^oF\Big\|_2^2\leqs 2\delta_o\sum_{j\in\mathbb Z} (|F(q_j)|^2+|F(q_{j+1})|^2).
\]
In conclusion, 
\[
\|AF\|^2_{PW_\alpha}\leqs 4\delta \sum_{\lambda\in \Lambda} |F(\lambda)|^2
\]
as we wished to show. 
Let us now focus on $\|F-AF\|_{PW_\alpha
}$ with $F\in\ PW_\alpha$. Since $F\in PW_\alpha$, recalling \eqref{eq:I_j} and \eqref{eq:L_j}, we have
\[
F=PF=P\Big(\sum_{j\in\mathbb Z} F( 1_{I_j}+ 1_{L_j})\Big). 
\]
Then, 
\begin{align}\label{F_AF}
\begin{split}
\|F-AF\|^2_{PW_\alpha}&=\bigg\|P\bigg(\sum_{j\in\mathbb Z}(F-T^e_jF) 1_{I_j}+\sum_{j\in\mathbb Z}(F-T^o_jF) 1_{L_j}\bigg)\bigg\|^2_{PW_\alpha}\\
&\leqs \bigg\|\sum_{j\in\mathbb Z}(F-T^e_jF) 1_{I_j}+\sum_{j\in\mathbb Z}(F-T^o_jF) 1_{L_j}\bigg\|_{PW_\alpha}^2\\
&= \sum_{j\in\mathbb Z} \sum_{m\in I_j} \big|F(m)-T^e_jF(m)\big|^2+ \sum_{j\in\mathbb Z} \sum_{m\in L_j} \big|F(m)-T^o_jF(m)\big|^2.
\end{split}
\end{align}
Now,
\begin{align*}
    \sum_{m\in I_j} |F(m)-T^e_jF(m)|^2&= \sum_{m\in I_j} \left|F(m)-F(p_j)-\frac{F(p_{j+1})-F(p_j)}{p_{j+1}-p_j}(m-p_j)\right|^2\\
    &= \sum_{\ell=0}^{\frac 12(p_{j+1}-p_j)} \left|F(p_j+2\ell)-F(p_j)-\frac{F(p_{j+1})-F(p_j)}{p_{j+1}-p_j}(2\ell)\right|^2
\end{align*}
and use the discrete Wirtinger inequality as follows. Set 
\[
s_{j}^p(\ell)=F(p_j+2\ell)-F(p_j)-\frac{F(p_{j+1})-F(p_j)}{p_{j+1}-p_j}(2\ell),\qquad \ell\in\Big\{0,\dots,\frac 12(p_{j+1}-p_j)\Big\},
\]
and note that $s_j^p(0)=s_j^p(\frac 12(p_{j+1}-p_j))=0$. Hence, from Theorem \ref{thm: Wirt} we get
\begin{align*}
\sum_{m\in I_j} |F(m)-T^e_jF(m)|^2&=\sum_{\ell=0}^{\frac 12(p_{j+1}-p_j)} \left|s_j^p(\ell)\right|^2\leqslant \frac{1}{16\sin^4(\frac \pi{p_{j+1}-p_j})}\sum_{\ell=1}^{\frac 12(p_{j+1}-p_j)-2} \left|\nabla_1^2s_j^p(\ell)\right|^2,
\end{align*}
where, by direct computation,
\begin{align*}
    \nabla_1^2s_j^p(\ell) &=s_j^p(\ell+2)-2s_j^p(\ell+1)+s_j^p(\ell)\\
    &=F(p_j+2\ell+4)-2 F(p_j+2\ell+2)+F(p_j+2\ell)=\nabla_2^2F(p_j+2\ell).
\end{align*} 
Then,
\begin{align*}
\sum_{j\in\mathbb Z} \sum_{m\in I_j} \big|F(m)-T^e_jF(m)\big|^2&\leqs \sum_{j\in\mathbb Z}\frac{1}{16\sin^4(\frac \pi{p_{j+1}-p_j})}\sum_{\ell=1}^{\frac 12(p_{j+1}-p_j)-2} \left|\nabla_2^2F(p_j+2\ell)\right|^2\\
&\leqs \frac{1}{16\sin^4(\frac \pi{\delta_e})}\sum_{m\in2\mathbb Z} |\nabla^2_2 F(m)|^2. 
\end{align*}
Similarly, we obtain the analogous estimate for the odd integers. Namely,
\begin{align*}
\sum_{j\in\mathbb Z} \sum_{m\in L_j} \big|F(m)-T^o_jF(m)\big|^2
&\leqs \frac{1}{16\sin^4(\frac \pi{\delta_o})}\sum_{m\in2\mathbb Z+1} |\nabla^2_2 F(m)|^2. 
\end{align*}
Putting everything back together, from \eqref{F_AF} we get
\begin{align*}
\|F-AF\|^2_{PW_\alpha}&\leqs \frac{1}{16\sin^4(\frac{\pi}{\delta})} \sum_{m\in\mathbb Z}|\nabla^2_2 F(m)|^2\leqs \frac{\sin^4\alpha}{\sin^4 (\frac{\pi}{\delta})}\|F\|^2_{PW_\alpha},
\end{align*}
where we used the Bernstein inequality for the operator $\nabla_2$ (Remark \ref{rmk:bernstein}). 
\end{proof}

\begin{rmk}
Our sufficient sampling condition is not sharp. Notice that  $\max(\delta_e,\delta_o)<\pi/\alpha$ can be reformulated as $\min(1/\delta_e,1/\delta_0)>\alpha/\pi$, which clearly is a condition stronger than \eqref{eq:necessaria_delte}.
\end{rmk}

\section{Final Remarks}\label{s:remarks}
   A notion of Paley-Wiener spaces in discrete settings, specifically on combinatorial graphs, has been studied by Pesenson in the celebrated work~\cite{Pesenson} via spectral theory. We now compare $PW_\alpha$ with the spaces defined by Pesenson. 
Since $PW_\alpha$ can be identified with a closed subspace of $L^2(\mathbb Z)$ via Theorem \ref{thm:PW-vero}, we should compare $PW_\alpha$ with Pesenson's space on the graph $\mathbb Z$, which, following Pesenson's notation, we denote by $\mathcal P\mathcal W_\omega(\mathbb Z)$. In his work Pesenson never mention discrete holomorphicity; he only relies on spectral theory and provide the following characterization of $\mathcal P\mathcal W_\omega(\mathbb Z)$. As usual, in what follows we are identifying the torus $\mathbb T=\mathbb R/2\pi\mathbb Z$ with the interval $[-\pi, \pi)$.
\begin{thm}{{\cite[Theorem 5.1]{Pesenson}}}\label{thm:pesenson}
A function $F$ in $L^2(\mathbb Z)$ belongs to $\mathcal P\mathcal W_\omega(\mathbb Z)$, $0<\omega<2$, if and only if the Fourier transform $f$ of $F$ is a function on the torus $\mathbb T$ supported on the set
\[
\{t: |t|\leqs 2\arcsin\sqrt{\omega/ 2}\}.
\]
\end{thm}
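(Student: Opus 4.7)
The plan is to identify $\mathcal{PW}_\omega(\mathbb Z)$ as the range of a spectral projection of the (appropriately normalized) combinatorial Laplacian $\mathcal L$ on $\mathbb Z$, and then to diagonalize $\mathcal L$ via the Fourier transform $F\mapsto f$, $f(t)=\sum_{m\in\mathbb Z}F(m)e^{-imt}$, which is an isometry $L^2(\mathbb Z)\to L^2(\mathbb T)$. Since $\mathcal L$ is translation invariant, after this conjugation it becomes multiplication by a symbol $\mu(t)$, so the spectral condition $\sigma(\mathcal L|_{\mathcal{PW}_\omega(\mathbb Z)})\subseteq [0,\omega]$ translates directly into a support condition on $f$.

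Concretely, I would fix the convention so that
\[
\mathcal L F(m)=F(m)-\tfrac12\bigl(F(m+1)+F(m-1)\bigr),
\]
whose spectrum equals $[0,2]$ (consistently with the assumption $0<\omega<2$). Using $\widehat{F(\cdot\pm 1)}(t)=e^{\pm it}f(t)$, one immediately checks that $\widehat{\mathcal L F}(t)=\mu(t)f(t)$ with
\[
\mu(t)=1-\cos t=2\sin^2(t/2),
\]
and hence, by the spectral theorem, $F\in\mathcal{PW}_\omega(\mathbb Z)$ if and only if $\mathbf 1_{\{\mu\leqs\omega\}}f=f$, that is, $f$ vanishes almost everywhere outside the sublevel set $\{t\in\mathbb T:\mu(t)\leqs\omega\}$.

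Finally, solving $2\sin^2(t/2)\leqs\omega$ for $0<\omega<2$ is equivalent to $|\sin(t/2)|\leqs\sqrt{\omega/2}<1$, which, after applying $\arcsin$, yields $|t|\leqs 2\arcsin\sqrt{\omega/2}$. This is precisely the set appearing in the statement. The only mildly delicate point is to confirm that Pesenson's general-graph normalization, specialized to $\mathbb Z$, coincides with the one producing this exact constant; once the normalizations are matched, the argument reduces to Parseval's identity and the functional calculus for the multiplication operator by $\mu$.
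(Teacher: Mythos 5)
Your argument is correct, but note that the paper does not prove this statement at all: it is imported verbatim from Pesenson's work as \cite[Theorem 5.1]{Pesenson}, so there is no in-paper proof to compare against. Your route --- diagonalizing the translation-invariant Laplacian $\mathcal L F(m)=F(m)-\tfrac12(F(m+1)+F(m-1))$ via the Fourier transform to get the symbol $2\sin^2(t/2)$, and identifying $\mathcal P\mathcal W_\omega(\mathbb Z)$ with the range of the spectral projection onto $[0,\omega]$ --- is essentially Pesenson's own proof, and the normalization you chose is the correct one (it matches the computation $2\mathcal L=\nabla_1^2$, $\omega_\alpha=1-\cos 2\alpha$ used implicitly in Section \ref{s:remarks}). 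The only point you should make explicit is that Pesenson \emph{defines} $\mathcal P\mathcal W_\omega$ by the Bernstein-type condition $\|\mathcal L^s F\|\leqslant \omega^s\|F\|$ for all $s$, and its equivalence with the spectral-subspace description (your starting point) is itself a theorem in his paper, obtained from the functional calculus; once that identification is granted, your reduction to the sublevel set $\{2\sin^2(t/2)\leqslant\omega\}=\{|t|\leqslant 2\arcsin\sqrt{\omega/2}\}$ is complete.
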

Notice that $2\arcsin\sqrt{\omega/2}$ is in $(0,\pi)$. Thus, it is clear from Theorem \ref{thm:PW-vero} and Theorem \ref{thm:pesenson} that $PW_\alpha$ and $\mathcal P\mathcal W_\omega$ are not comparable in general. Nevertheless, we note the following. 
Given $F$ in $PW_\alpha$ set $F^e(m)=F(2m)$, where, again, we write $F(2m)$  meaning $F(2m,0)$. Then,
\begin{align*}
F^e(m)=\frac{1}{2\pi}\int_{D_\alpha} f(t)e^{i2mt}\, dt=\frac{1}{2\pi}\int_{2D_\alpha} f^e(t)  e^{imt}\, dt,
\end{align*}
   where $f^e(t)= f(t/2)/2$. Such function is supported on the set
   \[
   2D_\alpha= \{t:|t|<2\alpha\}.
   \]
   Thus, the function $F^e$ belongs to $\mathcal P\mathcal W_{\omega_\alpha}$ with $\omega_\alpha=2\sin^2(\alpha)$. The same holds true for the function $F^o(m)=F(2m+1)$. Therefore, to obtain a sampling result for $PW_\alpha$ we could apply Pesenson's sampling results for the space $\mathcal P\mathcal W_{\omega_\alpha}$ to the functions $F^e$ and $F^o$.  
According to \cite[Theorem 1.4]{Pesenson}, a function $F$ in $\mathcal P\mathcal W_{\omega_\alpha}$ is uniquely determined by its values on a set $U=\mathbb Z\backslash S$, 
   where $S$ is a finite or infinite union of sets $S_j$ of successive vertices such that
    \[\# S_j<\frac \pi{2\arcsin\sqrt{\frac{\omega_\alpha} 2}}-1=\frac{\pi}{2\alpha}-1,
    \]
     which, in other words, means that the maximum distance between two consecutive sampling points is $\pi/(2\alpha)$. 
       Hence, to sample $F$ in $PW_\alpha$ 
     by simultaneously sampling $F^e$ and $F^o$ we would need $\max(\delta_e,\delta_o)<\pi/\alpha$, which is exactly our hypothesis in Theorem \ref{thm: Sampling}.

\bibliographystyle{amsalpha}
\bibliography{DHFbib}

\newcommand{\etalchar}[1]{$^{#1}$}
\providecommand{\bysame}{\leavevmode\hbox to3em{\hrulefill}\thinspace}
\providecommand{\MR}{\relax\ifhmode\unskip\space\fi MR }
\providecommand{\MRhref}[2]{%
  \href{http://www.ams.org/mathscinet-getitem?mr=#1}{#2}
}
\providecommand{\href}[2]{#2}
\begin{thebibliography}{FGH{\etalchar{+}}17}

\bibitem[AJSV13]{Alpay}
D.~Alpay, P.~Jorgensen, R.~Seager, and D.~Volok, \emph{On discrete analytic
  functions: products, rational functions and reproducing kernels}, J. Appl.
  Math. Comput. \textbf{41} (2013), no.~1-2, 393--426.

\bibitem[And14]{Andersen}
N.~B. Andersen, \emph{Entire {$L^p$}-functions of exponential type}, Expo.
  Math. \textbf{32} (2014), no.~3, 199--220.

\bibitem[BG16]{BG}
A.~I. Bobenko and F.~G\"unther, \emph{Discrete complex analysis on planar
  quad-graphs}, Advances in discrete differential geometry, Springer, [Berlin],
  2016, pp.~57--132. \MR{3587183}

\bibitem[BLMS22]{BLMS}
L.~Buhovsky, A.~Logunov, E.~Malinnikova, and M.~Sodin, \emph{A discrete
  harmonic function bounded on a large portion of {$\Bbb Z^2$} is constant},
  Duke Math. J. \textbf{171} (2022), no.~6, 1349--1378.

\bibitem[Bol68]{Bolen}
J.~C. Bolen, \emph{A reproducing kernel function and convergence properties for
  discrete analytic functions}, PhD thesis, Texas Christian University, 1968.

\bibitem[Che16]{C}
D.~Chelkak, \emph{Robust discrete complex analysis: a toolbox}, Ann. Probab.
  \textbf{44} (2016), no.~1, 628--683.

\bibitem[CLR23]{CLR}
D.~Chelkak, B.~Laslier, and M.~Russkikh, \emph{Dimer model and holomorphic
  functions on t-embeddings of planar graphs}, Proc. Lond. Math. Soc. (3)
  \textbf{126} (2023), no.~5, 1656--1739.

\bibitem[CS11]{CS}
D.~Chelkak and S.~Smirnov, \emph{Discrete complex analysis on isoradial
  graphs}, Adv. Math. \textbf{228} (2011), no.~3, 1590--1630.

\bibitem[CS12]{CS2}
\bysame, \emph{Universality in the 2{D} {I}sing model and conformal invariance
  of fermionic observables}, Invent. Math. \textbf{189} (2012), no.~3,
  515--580.

\bibitem[DM71]{DM}
C.~R. Deeter and C.~W. Mastin, \emph{The discrete analog of a minimum problem
  in conformal mapping}, Indiana Univ. Math. J. \textbf{20} (1970/71),
  355--367.

\bibitem[DP68]{DP}
R.~J. Duffin and Elmor~L. Peterson, \emph{The discrete analogue of a class of
  entire functions}, J. Math. Anal. Appl. \textbf{21} (1968), 619--642.

\bibitem[Duf56]{D}
R.~J. Duffin, \emph{Basic properties of discrete analytic functions}, Duke
  Math. J. \textbf{23} (1956), 335--363.

\bibitem[Duf68]{D2}
\bysame, \emph{Potential theory on a rhombic lattice}, J. Combinatorial Theory
  \textbf{5} (1968), 258--272.

\bibitem[Fer44]{Fer}
Jacqueline Ferrand, \emph{Fonctions pr\'eharmoniques et fonctions
  pr\'eholomorphes}, Bull. Sci. Math. (2) \textbf{68} (1944), 152--180.

\bibitem[FGH{\etalchar{+}}17]{RomeroLondon}
H.~F\"{u}hr, K.~Gr\"{o}chenig, A.~Haimi, A.~Klotz, and J.~L. Romero,
  \emph{Density of sampling and interpolation in reproducing kernel {H}ilbert
  spaces}, J. Lond. Math. Soc. (2) \textbf{96} (2017), no.~3, 663--686.

\bibitem[FTT55]{FTT}
K.~Fan, O.~Taussky, and J.~Todd, \emph{Discrete analogs of inequalities of
  {W}irtinger}, Monatsh. Math. \textbf{59} (1955), 73--90.

\bibitem[GM14]{GM}
M.~Guadie and E.~Malinnikova, \emph{On three balls theorem for discrete
  harmonic functions}, Comput. Methods Funct. Theory \textbf{14} (2014), no.~4,
  721--734. \MR{3274896}

\bibitem[Gr{\"o}92]{G-irregular}
K.~Gr{\"o}chenig, \emph{Reconstruction algorithms in irregular sampling}, Math.
  Comp. \textbf{59} (1992), no.~199, 181--194.

\bibitem[Gr{\"o}93]{G-discrete}
\bysame, \emph{A discrete theory of irregular sampling}, Linear Algebra Appl.
  \textbf{193} (1993), 129--150.

\bibitem[Gua13]{Guadie}
M.~Guadie, \emph{Stability estimates for discrete harmonic functions on product
  domains}, Appl. Anal. Discrete Math. \textbf{7} (2013), no.~1, 143--160.
  \MR{3086173}

\bibitem[Isa41]{Isa}
R.~P. Isaacs, \emph{A finite difference function theory}, Univ. Nac. Tucum\'an.
  Revista A. \textbf{2} (1941), 177--201.

\bibitem[Isa52]{Isa2}
\bysame, \emph{Monodiffric functions}, Construction and applications of
  conformal maps. {P}roceedings of a symposium, National Bureau of Standards
  Applied Mathematics Series, vol. No. 18, U.S. Govt. Printing Office,
  Washington, DC, 1952, pp.~257--266.

\bibitem[Ken02]{Kenyon}
R.~Kenyon, \emph{The {L}aplacian and {D}irac operators on critical planar
  graphs}, Invent. Math. \textbf{150} (2002), no.~2, 409--439.

\bibitem[Lov04]{L}
L.~Lov{\'a}sz, \emph{Discrete analytic functions: an exposition}, Surveys in
  differential geometry \textbf{9} (2004), no.~1, 241--273.

\bibitem[Mer01]{Mercat}
C.~Mercat, \emph{Discrete {R}iemann surfaces and the {I}sing model}, Comm.
  Math. Phys. \textbf{218} (2001), no.~1, 177--216.

\bibitem[Mer08]{Mercat2}
\bysame, \emph{Discrete complex structure on surfel surfaces}, Discrete
  geometry for computer imagery, Lecture Notes in Comput. Sci., vol. 4992,
  Springer, Berlin, 2008, pp.~153--164.

\bibitem[Mug80]{Mugler}
D.~H. Mugler, \emph{The discrete {P}aley-{W}iener theorem}, J. Math. Anal.
  Appl. \textbf{75} (1980), no.~1, 172--179.

\bibitem[Pes08]{Pesenson}
I.~Pesenson, \emph{Sampling in {P}aley-{W}iener spaces on combinatorial
  graphs}, Trans. Amer. Math. Soc. \textbf{360} (2008), no.~10, 5603--5627.
  \MR{2415088}

\bibitem[Rud76]{Rudin}
W.~Rudin, \emph{Principles of mathematical analysis}, third ed., International
  Series in Pure and Applied Mathematics, McGraw-Hill Book Co., New
  York-Auckland-D\"usseldorf, 1976.

\bibitem[Sko13]{Skopenkov}
M.~Skopenkov, \emph{The boundary value problem for discrete analytic
  functions}, Adv. Math. \textbf{240} (2013), 61--87.

\bibitem[Smi10]{Smirnov}
S.~Smirnov, \emph{Discrete complex analysis and probability}, Proceedings of
  the {I}nternational {C}ongress of {M}athematicians. {V}olume {I}, Hindustan
  Book Agency, New Delhi, 2010, pp.~595--621.

\bibitem[You80]{Young}
R.~M. Young, \emph{An introduction to nonharmonic {F}ourier series}, Pure and
  Applied Mathematics, vol.~93, Academic Press, Inc. [Harcourt Brace
  Jovanovich, Publishers], New York-London, 1980.

\bibitem[ZD77]{ZD}
D.~Zeilberger and H.~Dym, \emph{Further properties of discrete analytic
  functions}, J. Math. Anal. Appl. \textbf{58} (1977), no.~2, 405--418.

\bibitem[Zei77a]{Z2}
D.~Zeilberger, \emph{A discrete analogue of the {P}aley-{W}iener theorem for
  bounded analytic functions in a half plane}, J. Austral. Math. Soc. Ser. A
  \textbf{23} (1977), no.~3, 376--378.

\bibitem[Zei77b]{Z4}
\bysame, \emph{Discrete analytic functions of exponential growth}, Trans. Amer.
  Math. Soc. \textbf{226} (1977), 181--189.

\bibitem[Zei77c]{Z3}
\bysame, \emph{A new approach to the theory of discrete analytic functions}, J.
  Math. Anal. Appl. \textbf{57} (1977), no.~2, 350--367.

\end{thebibliography}
\end{document}